\newtheorem{theorem}{Theorem}[section]
\newtheorem{lemma}[theorem]{Lemma}
\newtheorem{definition}[theorem]{Definition}
\newtheorem{assumption}[theorem]{Assumption}
\newtheorem{remark}[theorem]{Remark}
\newtheorem{corollary}[theorem]{Corollary}
\numberwithin{equation}{section}
\DeclareMathOperator*{\argmin}{arg\,min}
\def\D{\partial_t^\alpha}
\def\N+{n\in\mathbb{N}^{+}}
\def\A{\mathcal{A}}
\def\l{\langle}
\def\r{\rangle}
\def\S{\text{Span}}
\def\V{\mathbb{V}}
\def\E{\mathbb{E}}
\def\SN{\mathcal{S}_N}
\def\WSN{\widehat{\mathcal{S}}_K}
\def\A{\mathcal{A}}
\def\C{{\rm Cov}}
\def\W{\mathbb{W}}
\def\R{\mathbb{R}}
\def\hun{\widehat{u}_n(T)}
\def\hum{\widehat{u}_m(T)}
\begin{document}

\title{An inverse random source problem in a stochastic fractional diffusion equation}
\author[1]{Pingping Niu\thanks{ppniu14@fudan.edu.cn}}
\author[2]{Tapio Helin\thanks{tapio.helin@helsinki.fi}}
\author[2]{Zhidong Zhang\thanks{zhidong.zhang@helsinki.fi}}
\affil[1]{School of Mathematical Sciences, Fudan University, China}
\affil[2]{Department of Mathematics and Statistics, University of Helsinki, Finland}

\maketitle

\begin{abstract}
 In this work the authors consider an inverse source problem in the following 
 stochastic fractional diffusion equation 
 $$\D u(x,t)+\A u(x,t)=f(x)h(t)+g(x) \dot{\W}(t).$$
 The interested inverse problem is to reconstruct $f(x)$ and $g(x)$ by the 
 statistics of the final time data $u(x,T).$ Some direct problem results are proved 
 at first, such as the existence, uniqueness, representation and regularity of the 
 solution. Then the reconstruction scheme for $f$ and $g$ is given. To tackle the 
 ill-posedness, the Tikhonov regularization is adopted. Finally we give a regularized 
 reconstruction algorithm and some numerical results are displayed.\\\\
 \text{Keywords}: inverse problem, stochastic fractional diffusion equation, 
 random source, Tikhonov regularization, reconstruction, regularity, partial measurements.\\\\ 
 \text{AMS subject classifications}: 35R11, 35R30, 65C30, 65M32, 62N15.
\end{abstract}

\section{Introduction}
At a microscopic level, the physical phenomenon of diffusion is related to the random motion of 
individual particles. In one of his celebrated work, Einstein 
\cite{einstein1905molekularkinetischen} deduced that the density function of particles 
satisfies the classical diffusion equation under the 
key assumption that the mean squared displacement over 
a large number of jumps is proportional to time, i.e. $\overline{(\Delta x)^2}\propto t$.
Currently, a large array of physical evidence suggests that there exists also physical diffusion 
that does not satisfy this assumption 
\cite{KlafterSilbey:1980, MetzlerKlafter:2000,BouchaudGeorges:1990,GefenAharonyAlexander:1983}. 
In such \emph{anomalous diffusion} the rate of mean 
squared displacement may satisfy $\overline{(\Delta x)^2}\propto t^\alpha,\ \alpha\ne1.$
The different rate introduces a modification to the diffusion equation in the 
form of time fractional derivative and the corresponding equations 
are often called fractional differential equations (FDEs). The applications of FDEs include, to name a few,
the thermal diffusion in media with fractal geometry  \cite{nigmatullin1986realization}, 
highly heterogeneous aquifer \cite{adams1992field}, 
non-Fickian diffusion in geological formations \cite{berkowitz2006modeling},
mathematical finance \cite{BarkaiMetzlerKlafter:2000}, 
underground environmental problem \cite{hatano1998dispersive} and the analysis on  
viscoelasticity in material science \cite{mainardi2010fractional,wharmby2013generalization,wharmby2014modifying}.

Here we consider an FDE with a random source term
\begin{equation}\label{SFDE}
 \begin{cases}
  \begin{aligned}
   \D u(x,t)+\A u(x,t)&=F(x,t), &&(x,t)\in D\times(0,T],\ \alpha\in(1/2,1);\\  
   u(x,t)&=0, &&(x,t)\in\partial D\times(0,T];\\
   u(x,0)&=0, &&x\in D,
  \end{aligned}
 \end{cases}
\end{equation}
where $\D$ is the Djrbashyan-Caputo fractional derivative given by the expression
\begin{equation*}
	\D u = \frac{1}{\Gamma(1-\alpha)}
	\int_0^t(t-\tau)^{-\alpha}u'(\tau)\,d\tau\quad \text{for}\quad 0<\alpha<1
\end{equation*}
and $\Gamma$ stands for the Gamma function. However, we need a stricter 
restriction $\alpha\in(1/2,1)$ on $\alpha$ for the regularity estimate, 
which can be seen in the proof of Lemma \ref{regularity_L2}. 
Above, $D\subset \mathbb{R}^n$ is open and bounded, and the operator $\A$ with the definition 
\begin{equation*}
 \A u=-\sum_{i,j=1}^n a^{ij}(x)u_{x_ix_j}+\sum_{i=1}^nb^i(x)u_{x_i}+c(x)u
\end{equation*}
with $a^{ij}, b^i, c \in C^\infty(\R^n)$ is symmetric, elliptic and positive definite. The random source term has the expression
\begin{equation*}
	F(x,y) = f(x)h(t)+g(x)\dot{\W}(t),
\end{equation*}
where the function $h\in L^\infty([0,T])$ is known and $\W$ is the 
standard Wiener process on a probability space $(\Omega,\mathcal{F},\mathbb{P})$.
Due to the randomness, we refer to \eqref{SFDE} as stochastic fractional diffusion equation (SFDE) below.
Let us point out that there are also other alternatives
for the definition of the fractional derivative such as the Riemann--Liouville 
formulation, see \cite[Chapter 2.1]{kilbas2006theory}. However, the  
Djrbashyan--Caputo derivative is often preferred due to its convenient properties related to boundary and initial conditions.

In this paper, we study the following inverse problem related to correlation based imaging:
\begin{center}
	given the empirical expectation and correlations of the 
final time data $u(x,T)$, \\ can we recover the unknown functions $f$ and $|g|$?
\end{center}
Notice that the source term $g\dot{\W}$ has an invariant distribution with respect to the sign of $g$. Therefore,
the recovery is considered up to the sign of $g$. We give a positive answer to this question and demonstrate it by numerical simulations.


Correlation based imaging has become common 
in applied inverse problems, where randomness is often an inherent part of the model. 
If the observational data is extensive but exceptionally corrupted or noisy, it can make more sense to
analyze the correlations in the data that connect to the unknown parameters. 
This paradigm has interesting implications to the inverse problems research, since 
first, correlation-based imaging can remarkably reduce the ill-posedness of problems where no analytical solution is known (see \cite{caro2016inverse}) and, second, it introduces a new set of analytical problems that need novel mathematical innovations \cite{garnier2016passive, helin2016correlation}.

Our main contribution in this paper is to demonstrate that 
partial and noisy correlation data under different data acquisition geometries 
can yield useful information regarding the source terms $f$ and $g$.
We come to this conclusion as follows.
We first give a construction of the solution to the stochastic direct problem and give suitable regularity estimates given different a priori smoothness of the source terms.
Based on these results we show stability estimates for recovering $f$ and $|g|$ (Theorem \ref{stability}) and the uniqueness (Theorem \ref{Uniqueness}) given infinite-precision correlation data of the final time solution $u(x,T)$. Meanwhile, the representation and the properties of 
Mittag--Leffler function introduced in section \ref{sec:prelim} yield mild ill-posedness
for the inverse problem (Lemma \ref{ill-posedness}). 
We demonstrate our results in practise with numerical simulations
in section \ref{numerical}. We study different data acquisition geometries 
to find that satisfying localization of the sources can be achieved even
if the observed subdomains are relatively small.

\subsection{Outline of the paper}

This paper is organized as follows. In section \ref{sec:prelim} we collect some preliminary 
material containing the properties of Mittag--Leffler function and the It\^o isometry 
formula, which are crucial in the following proofs. Section \ref{sec:direct} includes 
several results for the forward problem, which support the inverse problem work. 
We study the inverse problem in section \ref{sec:ip}, proving the stability, 
uniqueness and ill-posedness results. Finally, numerical demonstrations are given in section \ref{numerical}.

\subsection{Previous literature}

The fractional differential equations have drawn considerable amount of 
attention among mathematical community lately. Let us mention the work 
by Sakamoto and Yamamoto \cite{sakamoto2011initial} to study the initial 
and boundary value problems for FDEs and the work by Luchko 
\cite{Luchko2009maximum, Luchko2011maximum} to establish the maximum principle 
in FDEs. Moreover, Jin, Lazarov and Zhou \cite{JinLazarovZhou:2016} 
gave a numerical scheme to approximate the FDE by the finite element method. 

In terms of inverse problems, Cheng $et\ al$ 
\cite{cheng2009uniqueness} gave one of the first proofs for a uniqueness theorem in one-dimensional FDE. The article 
\cite{Liu2017reconstruction} considered an inverse source problem in an FDE, which was 
close to this work. The authors in \cite{Li2017analyticity, Rundell2017fractional} 
analyzed the distributed differential equations, in which the assumption 
$\overline{(\Delta x)^2}\propto t^\alpha$ was extended to a more general case 
$\overline{(\Delta x)^2}\propto F(t)$, and studied some inverse problems in such equations. 
For an extensive review of the field we refer to \cite{Jin2015tutorial} and references therein.

Time fractional stochastic PDEs have gained attention recently, see e.g.
\cite{el2002some, sakthivel2012approximate, mijena2015space, zou2016galerkin} and references therein.
Our setup differs slightly from these works: previous studies often assume some spatial randomness of the source, whereas our source term is random only in time. To accommodate the randomness
in the spatial variable, one often smoothens the source in time. This operation is motivated and well explained in \cite{mijena2015space}. Let us also mention that first study of inverse source problems for time fractional stochastic PDEs were carried out in \cite{tuan2017inverse} for discrete random noise.

Correlation based imaging in inverse problems has been considered in applications already for a while,
see e.g. the early work \cite{devaney1979inverse} on inverse random source problems.
Since then correlation based imaging in random source problems has been considered widely 
in the framework of different PDE models by Li, Bao and others \cite{li2017stability, bao2017inverse, li2017inverse, Bao2016several, Bao2014Helmholtz, Li2011source, Bao2010numerical}.
In this regard our paper provides the first study of random source problems in fractional diffusion models. Let us also point out
that correlation based imaging has been considered for problems where the randomness is an inherent property of the medium or boundary condition
\cite{helin2018atmospheric, helin2016correlation, caro2016inverse, garnier2016passive, helin2017inverse, garnier2012correlation, garnier2009passive, borcea2002imaging}.

\section{Preliminaries}
\label{sec:prelim}

Since $\A$ is a symmetric and elliptic operator with domain $L^2_0(D)$, 
then its eigensystem $\{(\lambda_n, \phi_n(x)): \N+\}$ has the following 
properties:   
$0<\lambda_1\le\lambda_2\le\cdots\le\lambda_n<\cdots$ 
and $\{\phi_n:\N+ \}\subset H^2(D)\cap H_0^1(D)$ constitutes an 
orthonormal basis of $L^2(D).$ Throughout the paper, we denote the inner product 
in $L^2(D)$ by $\l\cdot,\cdot\r_{L^2(D)}$. Moreover, we write $f \lesssim g$ for two functions $f,g: X \to \R$ on some domain $X$ if there is a universal constant $C>0$ such that $f(x) \leq C g(x)$ for all parameters $x \in X$. Similarly, we write $f \simeq g$ if both $f \lesssim g$ and $g \lesssim f$ hold.

%

Let us now introduce the Mittag--Leffler function which will play a 
central role in the following analysis. The Mittag--Leffler function is defined as 
\begin{equation*}
E_{\alpha,\beta}(z) = \sum_{k=0}^\infty \frac{z^k}{\Gamma(k\alpha+\beta)}
\end{equation*}
for $z\in \mathbb{C}$.
Notice that this expression generalizes the natural exponential function since 
$E_{1,1}(z)=e^z.$ 

Let us next record some well-known properties of the function $E_{\alpha,\beta}$.
Below, we study the behaviour of $E_{\alpha,\beta}$ only on the negative real line.
However, the statements generalize to the complex plane. For reference, see \cite{Podlubny1999fractional, kilbas2006theory}.

\begin{lemma} \cite[Theorem 1.4]{Podlubny1999fractional}\label{mittag_bound}
Let $0<\alpha<2$ and $\beta\in\mathbb{R}$ be arbitrary. Then it holds that
\begin{equation*}
	\label{eq:ML_decay_rate}
  |E_{\alpha,\beta}(-t)| \leq \frac{C}{1+t}
\end{equation*}
for any $t\geq 0$
and for any $p \in {\mathbb N}$ we have the asymptotic formula
\begin{equation*}
	\label{eq:ML_asymptotic_formula}
  E_{\alpha,\beta}(-t)=-\sum_{k=1}^p \frac{(-t)^{-k}}{\Gamma(\beta-\alpha k)}
  +{\mathcal O}(t^{-1-p})
\end{equation*}
as $t \to \infty$.
\end{lemma}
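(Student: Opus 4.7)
The plan is to derive both claims from the Hankel contour representation
\begin{equation*}
E_{\alpha,\beta}(z) = \frac{1}{2\pi i} \int_{Ha} \frac{\zeta^{\alpha - \beta} e^\zeta}{\zeta^\alpha - z}\, d\zeta,
\end{equation*}
where $Ha$ is a Hankel contour encircling the origin counterclockwise (two rays at angle $\pm \theta$ from the positive real axis, joined by a small circular arc) chosen so that the integrand has no pole on or inside $Ha$ for the relevant values of $z$. This representation follows from the reciprocal-Gamma formula $1/\Gamma(s) = (2\pi i)^{-1}\int_{Ha} e^\zeta \zeta^{-s}\, d\zeta$ by termwise integration of the series definition of $E_{\alpha,\beta}$; it is the workhorse of the argument.

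For the asymptotic formula, I set $z = -t$ with $t > 0$ and choose $\theta \in (\pi/2,\min(\pi,\pi/\alpha))$, which is a non-empty interval precisely because $0<\alpha<2$. With this choice $\zeta^\alpha$ stays off the negative real axis along $Ha$, and a direct completion-of-squares argument yields the uniform lower bound $|\zeta^\alpha + t| \geq \tfrac12|\sin(\alpha\theta)|(|\zeta|^\alpha + t)$. Expanding the kernel as a truncated geometric series
\begin{equation*}
\frac{1}{\zeta^\alpha + t} = \sum_{k=0}^{p-1} \frac{(-1)^k \zeta^{\alpha k}}{t^{k+1}} + \frac{(-1)^p \zeta^{\alpha p}}{t^p(\zeta^\alpha + t)},
\end{equation*}
each polynomial term integrates, via Hankel's formula, to $1/\Gamma(\beta - \alpha(k+1))$, and after reindexing $j = k+1$ the principal part becomes exactly $-\sum_{j=1}^p (-t)^{-j}/\Gamma(\beta-\alpha j)$. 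The remainder contribution is bounded by
\begin{equation*}
\frac{1}{t^{p+1}}\int_{Ha} \frac{|\zeta|^{\alpha p}}{1 + |\zeta|^\alpha/t}\, e^{\re \zeta}\,|d\zeta|,
\end{equation*}
which the change of variable $\zeta = t^{1/\alpha}\eta$ (or splitting $Ha$ along $|\zeta| = t^{1/\alpha}$) reveals to be $\mathcal{O}(t^{-p-1})$.

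The uniform estimate $|E_{\alpha,\beta}(-t)| \leq C/(1+t)$ then follows by combining two pieces. The asymptotic with $p = 1$ yields $|E_{\alpha,\beta}(-t)| \leq C_1/t$ for all $t \geq T_0$ with some threshold $T_0$; on the compact interval $[0,T_0]$ the entire function $E_{\alpha,\beta}$ is continuous and hence bounded by some $C_2$. Taking $C = \max\{C_1,\,C_2(1+T_0)\}$ produces the claim.

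The main obstacle, as I see it, is the $t$-uniform control of the remainder integral and the justification that a single contour $Ha$ can serve for all large $t$. Keeping the implicit constants genuinely independent of $t$ hinges on the lower bound $|\zeta^\alpha+t|\gtrsim|\zeta|^\alpha+t$ together with the rescaling argument above; this is precisely where the hypothesis $0<\alpha<2$ is essential, since outside this range the admissible interval for $\theta$ is empty and the pole at $\zeta^\alpha=-t$ cannot be separated from $Ha$ by a uniform margin.
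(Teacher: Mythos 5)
The paper offers no proof of this lemma, only the citation to \cite[Theorem 1.4]{Podlubny1999fractional}, and your Hankel-contour argument is essentially the standard proof given in that reference; it is correct. The key points are all in order: the choice $\theta\in(\pi/2,\min(\pi,\pi/\alpha))$ (nonempty exactly when $0<\alpha<2$) keeps the poles of $(\zeta^\alpha+t)^{-1}$ off the contour uniformly in $t$, the lower bound $|\zeta^\alpha+t|\gtrsim |\zeta|^\alpha+t$ makes the remainder estimate $t$-uniform, the truncated geometric expansion integrated termwise via Hankel's formula for $1/\Gamma$ reproduces the stated principal part, and the uniform bound $|E_{\alpha,\beta}(-t)|\le C/(1+t)$ follows by combining the $p=1$ asymptotics with continuity of the entire function on a compact interval.
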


A useful result related to high order differentials of Mittag--Leffler functions is given by Sakamoto and Yamamoto in \cite{sakamoto2011initial}.

\begin{lemma}\cite[Lemma $3.2$]{sakamoto2011initial}\label{mittag_derivative}
	For $\lambda>0,\ \alpha>0$ and $\N+,$ we have 
	$$\frac{d^n}{dt^n}E_{\alpha,1}(-\lambda t^\alpha)
	=-\lambda t^{\alpha-n}E_{\alpha,\alpha-n+1}(-\lambda t^\alpha),
	\ t>0.$$
\end{lemma}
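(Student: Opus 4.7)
The plan is to work directly with the power series defining $E_{\alpha,1}(-\lambda t^\alpha)$, differentiate termwise $n$ times, and reindex the result to recognize a new Mittag--Leffler function. A direct series manipulation seems simpler than proceeding by induction, which would entail repeatedly applying the product rule to $t^{\alpha-n}E_{\alpha,\alpha-n+1}(-\lambda t^\alpha)$ and re-matching the resulting series at each step.

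First I would substitute the series definition to obtain
$$E_{\alpha,1}(-\lambda t^\alpha) = \sum_{k=0}^\infty \frac{(-\lambda)^k}{\Gamma(k\alpha+1)}\,t^{k\alpha}.$$
The $k=0$ term is constant, so it vanishes after a single differentiation; for $k\geq 1$ the iterated power rule yields
$$\frac{d^n}{dt^n}t^{k\alpha}=\frac{\Gamma(k\alpha+1)}{\Gamma(k\alpha-n+1)}\,t^{k\alpha-n}.$$
Assuming termwise differentiation is justified (addressed below), this gives
$$\frac{d^n}{dt^n}E_{\alpha,1}(-\lambda t^\alpha) = \sum_{k=1}^\infty \frac{(-\lambda)^k}{\Gamma(k\alpha-n+1)}\,t^{k\alpha-n}.$$
The shift of index $k\mapsto k+1$ then factors out $-\lambda t^{\alpha-n}$ and reproduces the series of $E_{\alpha,\alpha-n+1}(-\lambda t^\alpha)$:
$$-\lambda t^{\alpha-n}\sum_{k=0}^\infty \frac{(-\lambda t^\alpha)^k}{\Gamma(k\alpha+\alpha-n+1)} = -\lambda t^{\alpha-n} E_{\alpha,\alpha-n+1}(-\lambda t^\alpha),$$
which is the claimed identity.

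The only delicate point is the justification of $n$-fold termwise differentiation. Since $E_{\alpha,\beta}$ is entire in its argument, the series $\sum_k z^k/\Gamma(k\alpha+\beta)$ has infinite radius of convergence, and by Stirling's formula the coefficients $1/\Gamma(k\alpha-n+1)$ decay faster than any negative power of $k$. Consequently, on any compact subinterval $[\delta,M]\subset(0,\infty)$ the differentiated series is bounded term-by-term by a convergent series independent of $t$, so the Weierstrass $M$-test legitimates the interchange of $d^n/dt^n$ and the summation. I expect this convergence check to be the main technical obstacle, as the remaining work is a routine reindexing once the termwise differentiation has been validated.
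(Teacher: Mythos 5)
Your argument is correct: the paper offers no proof of this lemma, quoting it directly from Sakamoto--Yamamoto, and your termwise differentiation of the power series --- with the convention that $1/\Gamma(m)=0$ at non-positive integers $m$ silently absorbing any terms where $k\alpha$ is an integer smaller than $n$ --- is precisely the standard argument given in that reference. The uniform convergence of each differentiated series on compact subintervals of $(0,\infty)$, which you invoke via the entirety of $E_{\alpha,\beta}$ and Stirling's formula, is the right justification for the interchange, so nothing is missing.
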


A function $f: (0,\infty) \to \R$ is called \emph{completely monotonic} if $f\in C^\infty(0,\infty)$ and
\begin{equation*}
	(-1)^n f^{(n)} (t) \geq 0
\end{equation*}
for all $t\in (0,\infty)$, i.e. the derivatives are alternating in sign. For the proof of the following result, see \cite{pollard1948completely} and \cite[Lemma 4.25]{gorenflo2014mittag}.
\begin{lemma}\label{mittag_positive}
\par For $0<\alpha<1,$ functions $t\mapsto E_{\alpha,1}(-t)$ and $t\mapsto E_{\alpha,\alpha}(-t)$ are completely monotonic.
\end{lemma}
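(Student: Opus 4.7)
My plan is to invoke Bernstein's theorem: a function $f\in C^\infty(0,\infty)$ is completely monotonic if and only if it is the Laplace transform $f(t)=\int_0^\infty e^{-ts}\,d\mu(s)$ of a non-negative Borel measure $\mu$ on $[0,\infty)$. Thus the task reduces to producing such representations for $E_{\alpha,1}(-t)$ and $E_{\alpha,\alpha}(-t)$ with $\alpha\in(0,1)$.

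First, I would start from the Hankel-type contour representation
\begin{equation*}
	E_{\alpha,\beta}(-t) = \frac{1}{2\pi i}\int_{\gamma} \frac{\zeta^{\alpha-\beta} e^\zeta}{\zeta^\alpha + t}\,d\zeta,
\end{equation*}
valid for any Hankel contour $\gamma$ wrapping around the origin, and deform $\gamma$ so that it hugs the branch cut along the negative real axis. On the two banks of the cut one writes $\zeta^\alpha=r^\alpha e^{\pm i\pi\alpha}$ and collapses the boundary values into a single real integral. After simplification this yields, for every $t>0$,
\begin{equation*}
	E_{\alpha,1}(-t) = \frac{\sin(\pi\alpha)}{\pi}\int_0^\infty \frac{r^{\alpha-1} e^{-tr}}{r^{2\alpha}+2r^\alpha\cos(\pi\alpha)+1}\,dr,
\end{equation*}
and analogously, taking $\beta=\alpha$ so that $\zeta^{\alpha-\beta}=1$,
\begin{equation*}
	E_{\alpha,\alpha}(-t) = \frac{\sin(\pi\alpha)}{\pi}\int_0^\infty \frac{e^{-tr}}{r^{2\alpha}+2r^\alpha\cos(\pi\alpha)+1}\,dr.
\end{equation*}
The integrands are pointwise non-negative: $\sin(\pi\alpha)>0$ for $\alpha\in(0,1)$, the factor $r^{\alpha-1}$ is positive on $(0,\infty)$, and the denominator equals $(r^\alpha+\cos(\pi\alpha))^2+\sin^2(\pi\alpha)>0$. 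Each function is therefore the Laplace transform of a non-negative density against Lebesgue measure on $(0,\infty)$, so Bernstein's theorem immediately delivers complete monotonicity.

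The main obstacle will be making the contour deformation rigorous. One must justify that the contribution from the large circular arc vanishes in the limit --- using $|e^\zeta|\to 0$ as $\re\zeta\to -\infty$ together with the bound $|\zeta^\alpha+t|\gtrsim |\zeta|^\alpha$ for large $|\zeta|$, which requires $\alpha<1$ --- and that the small loop around the origin contributes nothing, which in turn uses $\beta\in\{1,\alpha\}\le 1$ to control integrability at $r=0$. An alternative real-variable route via Pollard's identification of $E_\alpha(-t)$ with the Laplace transform of a one-sided $\alpha$-stable density is available, but obtaining the positivity of that density eventually reduces to essentially the same complex-analytic argument.
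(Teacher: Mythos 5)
The paper itself offers no proof of this lemma --- it delegates to Pollard and to \cite[Lemma 4.25]{gorenflo2014mittag} --- so a correct self-contained argument would be a genuine addition. Unfortunately, your two displayed spectral representations are not correct, and the error is fatal to the argument. The algebraic density $K_\alpha(r)=\frac{\sin(\pi\alpha)}{\pi}\,\frac{r^{\alpha-1}}{r^{2\alpha}+2r^\alpha\cos(\pi\alpha)+1}$ is the Laplace (spectral) density of the \emph{relaxation function} $t\mapsto E_{\alpha,1}(-t^\alpha)$, not of $t\mapsto E_{\alpha,1}(-t)$. You can see this three ways. (i) For $\alpha=1/2$ your first formula evaluates (via $r=u^2$) to $\frac{2}{\pi}\int_0^\infty\frac{e^{-tu^2}}{1+u^2}\,du=e^{t}\operatorname{erfc}(\sqrt{t})=E_{1/2}(-\sqrt{t})$, whereas $E_{1/2}(-t)=e^{t^2}\operatorname{erfc}(t)$; the true representing density of the latter is the Gaussian $\pi^{-1/2}e^{-s^2/4}$, not an algebraic function. (ii) Watson's lemma applied to your integrals gives decay $t^{-\alpha}$ and $t^{-1}$ as $t\to\infty$, while Lemma \ref{mittag_bound} gives $E_{\alpha,1}(-t)\sim t^{-1}/\Gamma(1-\alpha)$ and $E_{\alpha,\alpha}(-t)\sim t^{-2}/|\Gamma(-\alpha)|$. (iii) Your second integral diverges at $r=\infty$ whenever $\alpha\le 1/2$, yet $E_{\alpha,\alpha}(0)=1/\Gamma(\alpha)<\infty$; the lemma is claimed for all $\alpha\in(0,1)$. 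The mechanical source of the mistake is the contour collapse: in $\frac{\zeta^{\alpha-\beta}e^{\zeta}}{\zeta^\alpha+t}$ the variable $t$ enters algebraically, so the jump across the cut produces a rational kernel in $t$ (a Stieltjes-type expression with denominator $r^{2\alpha}+2tr^\alpha\cos(\pi\alpha)+t^2$), not the factor $e^{-tr}$ you wrote down; indeed $E_{\alpha,\beta}(-t)$ is entire in $t$, so it cannot equal the Laplace transform of any algebraically decaying density.

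This gap cannot be patched by simply relabelling, because complete monotonicity of $t\mapsto E_\alpha(-t^\alpha)$ does \emph{not} imply that of $t\mapsto E_\alpha(-t)$: one would have to compose with $t\mapsto t^{1/\alpha}$, which is not a Bernstein function when $\alpha<1$ (the implication only goes the other way). The route you set aside as ``essentially the same'' is in fact the necessary one: following Pollard, insert $\frac{1}{\zeta^\alpha+t}=\int_0^\infty e^{-ut}e^{-u\zeta^\alpha}\,du$ on a Hankel contour whose rays make an angle $\theta\in(\pi/2,\min(\pi,\pi/(2\alpha)))$ with the positive axis, so that $\re\zeta<0$ and $\re(\zeta^\alpha)>0$ simultaneously; after Fubini this exhibits $E_{\alpha,1}(-t)=\int_0^\infty e^{-ut}\varphi_\alpha(u)\,du$ with $\varphi_\alpha(u)=\frac{1}{2\pi i}\int e^{\zeta-u\zeta^\alpha}\zeta^{\alpha-1}\,d\zeta$, and the real content of the proof is the nonnegativity of $\varphi_\alpha$ (the one-sided stable density in disguise). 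The case $\beta=\alpha$ requires the analogous argument with $\zeta^{\alpha-\beta}=1$, or a direct appeal to Schneider's theorem that $E_{\alpha,\beta}(-t)$ is completely monotonic iff $0<\alpha\le1$ and $\beta\ge\alpha$, which is precisely the content of the reference the paper cites.
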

Lemma \ref{mittag_positive} yields immediately the next corollary.
\begin{corollary}\label{mittag_positive_1}
	If $0<\alpha<1$ and $t>0,$ then 
	$E_{\alpha,\alpha}(-t)\ge 0.$
\end{corollary}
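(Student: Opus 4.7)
The plan is straightforward: the corollary is advertised as an immediate consequence of Lemma \ref{mittag_positive}, so the task is only to unpack the definition of complete monotonicity at the trivial end.

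Recall that a function $f:(0,\infty)\to\R$ is completely monotonic precisely when $f\in C^\infty(0,\infty)$ and $(-1)^n f^{(n)}(t)\ge 0$ for every $n\ge 0$ and every $t>0$. I would simply specialize this inequality to $n=0$, which gives $f(t)\ge 0$ for all $t>0$.

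Applying this observation to $f(t)=E_{\alpha,\alpha}(-t)$, which is completely monotonic on $(0,\infty)$ by Lemma \ref{mittag_positive} under the hypothesis $0<\alpha<1$, yields $E_{\alpha,\alpha}(-t)\ge 0$ for every $t>0$, as required.

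There is no real obstacle here; the only subtlety worth flagging is that the definition of completely monotonic as stated in the preceding paragraph concerns $t\in(0,\infty)$, so the corollary is phrased with $t>0$ rather than $t\ge 0$. (Continuity of $E_{\alpha,\alpha}$ on $[0,\infty)$ together with $E_{\alpha,\alpha}(0)=1/\Gamma(\alpha)>0$ would extend the nonnegativity to $t=0$ if desired, but the statement as given does not require this extension.)
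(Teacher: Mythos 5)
Your proposal is correct and matches the paper exactly: the corollary is stated as an immediate consequence of Lemma \ref{mittag_positive}, and taking $n=0$ in the definition of complete monotonicity is precisely the intended one-line argument. The remark about $t>0$ versus $t\ge 0$ is a reasonable observation but not needed.
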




Finally, let us recall the well-known It\^o isometry formula.

\begin{lemma}[\cite{Bernt2003stochastic}] \label{Ito isometry formula} 
Let $(\Omega,\mathcal{F},\mathbb{P})$ be a probability space and let $f,g : [0,\infty)\times \Omega\rightarrow\mathbb{R}$
satisfy the following properties
\begin{itemize}
 \item[(1)] $(t,\omega)\rightarrow f(t,\omega)$ is $\mathcal{B}\times\mathcal{F}$
 -measurable, where $\mathcal{B}$ denotes the Borel $\sigma$-algebra on 
 $[0,\infty)$;
 \item[(2)] $f(t,\omega)$ is $\mathcal{F}_t$-adapted;
 \item[(3)] $\E \int_S^Tf^2(t,\omega)\mathrm{d}t <\infty$ for some $S,T>0$.
\end{itemize}
Then it follows that
\begin{equation}
	\label{eq:ito_isometry}
 \E\left[\left(\int_S^Tf(t,\omega)~\mathrm{d}\W(t)\right)
 \left(\int_S^Tg(t,\omega)~\mathrm{d}\W(t)\right)\right]
=\E\int_S^Tf(t,\omega)g(t,\omega)~\mathrm{d}t.
\end{equation}
\end{lemma}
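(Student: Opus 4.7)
The plan is to prove the identity by the standard three-step scheme: first verify it for elementary processes by direct calculation, then appeal to the density of elementary processes in the space of adapted $L^2$ integrands, and finally pass to the limit using continuity of the Itô integral.

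For the first step I would consider elementary integrands, i.e.\ processes of the form
\begin{equation*}
	f(t,\omega)=\sum_{i=0}^{N-1}a_i(\omega)\mathbf{1}_{[t_i,t_{i+1})}(t),\qquad g(t,\omega)=\sum_{i=0}^{N-1}b_i(\omega)\mathbf{1}_{[t_i,t_{i+1})}(t),
\end{equation*}
defined on a common partition $S=t_0<t_1<\cdots<t_N=T$, with $a_i,b_i$ being $\mathcal{F}_{t_i}$-measurable and square-integrable. The Itô integrals reduce to the finite sums $\sum_i a_i\,\Delta \W_i$ and $\sum_i b_i\,\Delta \W_i$ with $\Delta \W_i=\W(t_{i+1})-\W(t_i)$. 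Expanding the product, the cross terms with $i\neq j$ vanish because the later Brownian increment is independent of $\mathcal{F}_{\max(t_i,t_j)}$ and has mean zero (apply the tower property to $\mathcal{F}_{\max(t_i,t_j)}$), while the diagonal terms reduce to $\E[a_ib_i](t_{i+1}-t_i)$ since $\E[(\Delta \W_i)^2\mid\mathcal{F}_{t_i}]=t_{i+1}-t_i$. Summing over $i$ yields exactly $\E\int_S^T fg\,dt$, establishing \eqref{eq:ito_isometry} for elementary $f,g$.

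For the second step I would invoke the classical density result that every progressively measurable process in $L^2([S,T]\times\Omega)$ can be approximated in the $L^2([S,T]\times\Omega)$ norm by a sequence of elementary adapted processes (this is usually done in three sub-approximations: truncation to bounded processes, mollification in time to obtain continuous adapted processes, and Riemann-type piecewise constant discretisation). Choose such sequences $f_n\to f$ and $g_n\to g$. The diagonal version of the identity, already proven for the elementary case, shows that the Itô integral map $f\mapsto\int_S^T f\,d\W$ is an isometry from the elementary processes into $L^2(\Omega)$; thus it extends uniquely and continuously to the whole space, and $\int_S^T f_n\,d\W\to\int_S^T f\,d\W$ in $L^2(\Omega)$, and similarly for $g_n$.

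For the third step I pass to the limit in \eqref{eq:ito_isometry} for $(f_n,g_n)$. The right-hand side converges to $\E\int_S^T fg\,dt$ by Cauchy–Schwarz in $L^2([S,T]\times\Omega)$, and the left-hand side converges to $\E\bigl[\bigl(\int f\,d\W\bigr)\bigl(\int g\,d\W\bigr)\bigr]$ by Cauchy–Schwarz in $L^2(\Omega)$ combined with the $L^2$-convergence of the stochastic integrals. I expect the main obstacle to be the density step rather than the algebraic calculation for elementary processes: one must handle adaptedness carefully so that the approximating simple processes are themselves adapted, and this forces the standard three-layer approximation argument. Since the statement is invoked here as a textbook fact (with a citation to \cite{Bernt2003stochastic}), the proposed strategy is essentially a transcription of Øksendal's standard presentation.
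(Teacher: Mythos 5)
The paper does not prove this lemma at all --- it is imported verbatim from the cited reference \cite{Bernt2003stochastic} --- and your argument is exactly the standard three-step proof given there (elementary processes, density of adapted simple processes in $L^2([S,T]\times\Omega)$, passage to the limit via the isometry), so it is correct and there is nothing in the paper to diverge from. The one point worth tightening is the first step: for merely square-integrable $a_i,b_j$ the cross terms $\E\bigl[a_ib_j\,\Delta\W_i\,\Delta\W_j\bigr]$ are not obviously integrable, which is why the textbook treatment first takes the coefficients of the elementary processes to be bounded and only removes that restriction in the approximation step.
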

Later, we use the identity \eqref{eq:ito_isometry} for non-random functions and, consequently, the expectation on the right-hand side becomes trivial.


\section{Direct problem}
\label{sec:direct}

\subsection{Solution to the SFDE \eqref{SFDE}}

Let us introduce the notion of mild solution for our stochastic fractional differential equation. To make sense of the solution, we need some assumptions regarding the source term.

\begin{assumption}\label{inverse}
We assume that $f,g\in L^2(D)$ such that $g\neq 0$ and $h\in L^\infty(0,T)$ is positive and bounded from below, i.e.,
there exists $c_h>0$ s.t. $h\geq c_h$.
\end{assumption}

\begin{definition}\label{mild_solution}
A stochastic process $u: [0,T] \times \Omega \to L^2(D)$ defined by
 \begin{equation}\label{eq:mild_solution}
   u(\cdot,t,\omega)= \sum_{n=1}^\infty (I_{n,1}(t)+I_{n,2}(t,\omega)) \phi_n(\cdot),
 \end{equation}
where   
\begin{equation*}
\begin{aligned}
	I_{n,1}(t) &= f_n\int_0^t (t-\tau)^{\alpha-1} E_{\alpha,\alpha}(-\lambda_n(t-\tau)^\alpha) h(\tau)\ d\tau,\\   
	I_{n,2}(t,\omega) &= g_n \int_0^t (t-\tau)^{\alpha-1} E_{\alpha,\alpha}(-\lambda_n(t-\tau)^\alpha)\ d\W(\tau),
\end{aligned}
\end{equation*}
with
\begin{equation*}
 f_n=\l f(\cdot),\phi_n(\cdot)\r_{L^2(D)},\ g_n=\l g(\cdot),\phi_n(\cdot)\r_{L^2(D)},
\end{equation*}
is called a \emph{mild solution} of equation \eqref{SFDE}.
\end{definition}





The regularity of \eqref{eq:mild_solution} is proved below in Lemma \ref{regularity_L2}.
Notice also that the term $I_{n,1}(t)$ is fully deterministic and contains only information regarding the deterministic part of the source. Similarly, the term $I_{n,2}$ carries the information related to the stochastic source. In the following, we omit the notation $\omega$ for brevity and make the connection to the random element implicit.

\begin{remark}
The mild solutions to more general time-fractional stochastic PDEs have been considered in \cite{sakthivel2012approximate,zou2016galerkin} based on the semigroup approach taken in \cite{el2002some}. Our construction is related but uses the approach introduced by Sakamoto and Yamamoto in \cite{sakamoto2011initial}.
\end{remark}

\begin{lemma}\label{regularity_L2}
The stochastic process $u$ given in \eqref{eq:mild_solution} satisfies
 \begin{equation*}\label{regularity_1}
 \E\|u\|^2_{L^2(D\times[0,T])} \le C\Big(\|h\|_{L^2(0,T)}^2 \|f\|_{L^2(D)}^2
 +T^{2\alpha}\|g\|_{L^2(D)}^2\Big),
\end{equation*}
where $C>0$ is a constant.
\end{lemma}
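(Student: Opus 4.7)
The plan is to expand $u$ in the orthonormal basis $\{\phi_n\}$, interchange summation, expectation, and integration (which is justified once we have absolute bounds), and then estimate each Fourier mode separately. Since the stochastic integral $I_{n,2}(t)$ against a deterministic integrand is a mean-zero Gaussian, it is uncorrelated with the deterministic term $I_{n,1}(t)$, so
\begin{equation*}
\E\|u\|^2_{L^2(D\times[0,T])}
= \sum_{n=1}^\infty \int_0^T \Big(I_{n,1}(t)^2 + \E[I_{n,2}(t)^2]\Big)\,dt.
\end{equation*}
Thus it suffices to bound the deterministic piece by $\|h\|_{L^2(0,T)}^2\|f\|_{L^2(D)}^2$ and the stochastic piece by $T^{2\alpha}\|g\|_{L^2(D)}^2$, each with a constant independent of $n$.

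For the deterministic contribution, set $K_n(s)=s^{\alpha-1}E_{\alpha,\alpha}(-\lambda_n s^\alpha)$ and write $I_{n,1}(t) = f_n (K_n * h)(t)$. Applying Cauchy--Schwarz in $\tau$,
\begin{equation*}
I_{n,1}(t)^2 \le f_n^2\, \|h\|_{L^2(0,T)}^2 \int_0^t s^{2\alpha-2}\,E_{\alpha,\alpha}^2(-\lambda_n s^\alpha)\,ds.
\end{equation*}
The key observation is that $E_{\alpha,\alpha}(-\lambda_n s^\alpha)$ is uniformly bounded in $n$ and $s$: by Corollary \ref{mittag_positive_1} it is nonnegative, and by Lemma \ref{mittag_positive} it is completely monotonic, hence decreasing in its argument, so $0\le E_{\alpha,\alpha}(-\lambda_n s^\alpha)\le E_{\alpha,\alpha}(0)=1/\Gamma(\alpha)$. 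Therefore the inner integral is bounded by $\Gamma(\alpha)^{-2}\int_0^t s^{2\alpha-2}\,ds = \Gamma(\alpha)^{-2}\,t^{2\alpha-1}/(2\alpha-1)$, where precisely the hypothesis $\alpha>1/2$ is needed for the singularity at $s=0$ to be integrable. Integrating in $t$ over $[0,T]$ and summing Parseval-wise in $n$ gives the desired bound
\begin{equation*}
\sum_{n=1}^\infty \int_0^T I_{n,1}(t)^2\,dt \;\lesssim\; T^{2\alpha}\, \|h\|_{L^2(0,T)}^2\,\|f\|_{L^2(D)}^2.
\end{equation*}

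For the stochastic contribution, the It\^o isometry from Lemma \ref{Ito isometry formula}, applied with deterministic integrand, gives
\begin{equation*}
\E[I_{n,2}(t)^2] = g_n^2 \int_0^t s^{2\alpha-2}\,E_{\alpha,\alpha}^2(-\lambda_n s^\alpha)\,ds,
\end{equation*}
and the same uniform bound on $E_{\alpha,\alpha}$ yields $\E[I_{n,2}(t)^2]\lesssim g_n^2\, t^{2\alpha-1}$. Integrating in $t$ and summing over $n$ produces $T^{2\alpha}\|g\|_{L^2(D)}^2$ up to a universal constant. Combining the two estimates gives the claimed inequality and simultaneously justifies, by Fubini--Tonelli, the interchange of sum, expectation, and time integral used at the outset.

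The main obstacle is ensuring that all estimates are uniform in the eigenvalue $\lambda_n$, since otherwise summing Parseval's identity would fail. The sharp pointwise Mittag--Leffler bound from Lemma \ref{mittag_bound} looks attractive but is not needed here; the elementary observation that $E_{\alpha,\alpha}(-\cdot)$ is completely monotonic and hence bounded by its value at $0$ is enough, and it decouples the $\lambda_n$ dependence completely. The genuine analytic constraint sits entirely in the integrability of $s^{2\alpha-2}$ near the origin, explaining the restriction $\alpha\in(1/2,1)$.
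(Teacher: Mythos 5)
Your argument is correct in substance, and for the stochastic term it coincides with the paper's: It\^o isometry plus a uniform bound on $E_{\alpha,\alpha}$, with $\alpha>1/2$ guaranteeing integrability of $\tau^{2\alpha-2}$ near the origin. Where you genuinely diverge is the deterministic term. The paper writes $I_{n,1}=f_n\,(G_{\alpha,n}*h)$ and applies Young's convolution inequality $\|G_{\alpha,n}*h\|_{L^2(0,T)}\le\|G_{\alpha,n}\|_{L^1(0,T)}\|h\|_{L^2(0,T)}$, then evaluates the kernel's $L^1$ norm in closed form via Lemma \ref{mittag_derivative} and Corollary \ref{mittag_positive_1}, namely $\|G_{\alpha,n}\|_{L^1(0,T)}=\lambda_n^{-1}\bigl(1-E_{\alpha,1}(-\lambda_nT^\alpha)\bigr)\le\lambda_1^{-1}$; this yields $S_1\lesssim\|h\|_{L^2(0,T)}^2\|f\|_{L^2(D)}^2$ with a constant independent of $T$ and with no restriction on $\alpha$ for this term. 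You instead use Cauchy--Schwarz together with the crude uniform bound $0\le E_{\alpha,\alpha}(-s)\le E_{\alpha,\alpha}(0)=1/\Gamma(\alpha)$, which is more elementary (it needs only monotonicity, not the derivative identity) but costs a factor $T^{2\alpha}$ on the $\|h\|^2\|f\|^2$ term and imports the restriction $\alpha>1/2$ into the deterministic estimate where the paper does not need it. Since the lemma's constant $C$ is evidently meant to be independent of $T$ (otherwise the explicit factor $T^{2\alpha}$ in front of $\|g\|^2$ would carry no information), what you actually prove is $\E\|u\|^2_{L^2(D\times[0,T])}\le CT^{2\alpha}\bigl(\|h\|^2_{L^2(0,T)}\|f\|^2_{L^2(D)}+\|g\|^2_{L^2(D)}\bigr)$, which recovers the stated bound only for $T\lesssim1$; to get the statement as written, replace Cauchy--Schwarz by Young's inequality for the convolution. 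A small bonus of your route: noting that $\E I_{n,2}(t)=0$ kills the cross term and gives equality in the Parseval step, sparing the paper's factor of $2$.
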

\begin{proof}
 Recall that $\{\phi_n:n \in {\mathbb N}\}$ is an orthonormal basis of $L^2(D)$.  
Now for each $t\in [0,T]$ it holds that  
\begin{equation*}
 \begin{aligned}
  \|u(\cdot,t)\|^2_{L^2(D)}&
  =\left\|\sum_{n=1}^\infty (I_{n,1}(t)+ I_{n,2}(t))\phi_n(\cdot)\right\|^2_{L^2(D)}
  =\sum_{n=1}^\infty (I_{n,1}(t)+ I_{n,2}(t))^2\\
  &\le 2\sum_{n=1}^\infty [I_{n,1}(t)]^2
  +2\sum_{n=1}^\infty [I_{n,2}(t)]^2.
 \end{aligned}
\end{equation*}
Hence we have
\begin{equation*}
 \begin{aligned}
  \E \|u\|^2_{L^2(D\times[0,T])}
  &=\E\left[\int_0^T \|u(\cdot,t)\|^2_{L^2(D)}\ dt\right]\\
  &\lesssim \E \left[\int_0^T \sum_{n=1}^\infty [I_{n,1}(t)]^2
  +\sum_{n=1}^\infty [I_{n,2}(t)]^2\ dt\right]\\
  &=\int_0^T \sum_{n=1}^\infty [I_{n,1}(t)]^2 \ dt
  + \E \left[ \int_0^T\sum_{n=1}^\infty [I_{n,2}(t)]^2\ dt\right]\\
  &=\sum_{n=1}^\infty \| I_{n,1}\|_{L^2(0,T)}^2
  +\int_0^T\sum_{n=1}^\infty  \E I_{n,2}^2(t) \ dt\\
  &:=S_1+S_2.
 \end{aligned}
\end{equation*}

First, consider the sum $S_1$. We can write the term $I_{n,1}$ as the convolution
\begin{equation*}
	I_{n,1}(t)= f_n (G_{\alpha,n}* h)(t)
\end{equation*} 
where
\begin{equation*}
	G_{\alpha,n}(t) = t^{\alpha-1}E_{\alpha,\alpha}(-\lambda_nt^\alpha)
\end{equation*}
and, therefore, the Young's convolution inequality yields 
\begin{equation*}
 \|I_{n,1}\|_{L^2(0,T)}
 \le f_n \|G_{\alpha,n}\|_{L^1(0,T)}\cdot
 \| h\|_{L^2(0,T)};
\end{equation*}
while the following result is derived from Lemmas \ref{mittag_derivative}, 
\ref{mittag_positive} and Corollary \ref{mittag_positive_1} 
\begin{equation*}
 \|G_{\alpha,n}\|_{L^1(0,T)}
 =\int_0^T t^{\alpha-1}E_{\alpha,\alpha}(-\lambda_nt^\alpha) dt
 =\frac{1-E_{\alpha,1}(-\lambda_nT^\alpha)}{\lambda_n}\le \frac{1}{\lambda_1}.
\end{equation*}
In consequence, we can find the upper bound for $S_1$ as follows
\begin{equation*}
 S_1 \leq
 \frac 1{\lambda^2_1} \|h\|_{L^2(0,T)}^2 \sum_{n=1}^\infty f_n^2
 =C\|h\|_{L^2(0,T)}^2 \|f\|_{L^2(D)}^2.
\end{equation*}

Second, let us consider $S_2$. For any $t\in [0,T]$ we have 
\begin{equation*}
 \E I^2_{n,2}(t) =g_n^2\int_0^t\tau^{2\alpha-2}
 [E_{\alpha,\alpha}(-\lambda_n\tau^\alpha)]^2\ d\tau
 \le g_n^2\int_0^t \tau^{2\alpha-2} C^2 \ d\tau
 =Cg_n^2t^{2\alpha-1},
\end{equation*}
where we applied Lemmas \ref{mittag_bound}, \ref{Ito isometry formula} 
and the restriction $\alpha\in(1/2,1)$. Thus, the estimate of $S_2$ can be bounded by 
\begin{equation*}
  S_2\le \int_0^T\sum_{n=1}^\infty Cg_n^2t^{2\alpha-1}\ dt
  =CT^{2\alpha} \sum_{n=1}^\infty g_n^2
  \le CT^{2\alpha}\|g\|_{L^2(D)}^2.
\end{equation*}
Finally, combining the estimates for $S_1$ and $S_2$  
yields the desired result.
\end{proof}

Lemma \ref{regularity_L2} considered the $L^2$ 
regularity of the solution over time and space. However, one can also study the 
space $L^2$-bound for $u$ at a given time $t$.

\begin{lemma}\label{regularity_infty}
The supremum of the expected norm of the solution satisfies
\begin{equation*}
	\label{regularity_3}
	\sup_{0\le t\le T}\E\left[\|u(\cdot,t)\|^2_{L^2(D)}\right] \le C\Big(\|h\|_{L^\infty[0,T]}^2\|f\|^2_{L^2(D)}
 +T^{2\alpha-1}\|g\|^2_{L^2(D)}\Big).
\end{equation*} 
Moreover, if one has in addition that $g \in H^2(D)$, then
\begin{equation*}
 \sup_{0\le t\le T}\E\left[\|u(\cdot,t)\|^2_{H^2(D)} \right]
 \le C\Big(\|h\|_{L^\infty[0,T]}^2\|f\|^2_{L^2(D)} +
 T^{2\alpha-1}\|g\|^2_{H^2(D)}\Big).
\end{equation*}
\end{lemma}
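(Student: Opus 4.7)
The plan is to follow the same template as in Lemma \ref{regularity_L2}, replacing the time-integrated estimates by pointwise-in-$t$ bounds and, for the second inequality, by inserting extra factors of $\lambda_n$ through the spectral characterization of the $H^2$-norm.

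For the first inequality, I would start by applying Parseval in $L^2(D)$ and the elementary bound $(a+b)^2 \leq 2a^2 + 2b^2$ to get
\begin{equation*}
\E\|u(\cdot,t)\|^2_{L^2(D)} \;\lesssim\; \sum_{n=1}^\infty I_{n,1}^2(t) + \sum_{n=1}^\infty \E I_{n,2}^2(t),
\end{equation*}
exactly as in the proof of Lemma \ref{regularity_L2}. The stochastic sum is controlled exactly as before using Lemma \ref{Ito isometry formula} together with the uniform bound $|E_{\alpha,\alpha}(-\lambda_n s^\alpha)|\leq C$ from Lemma \ref{mittag_bound} and the restriction $\alpha>1/2$, giving $\E I_{n,2}^2(t) \lesssim g_n^2 t^{2\alpha-1}$ and hence $\sum_n \E I_{n,2}^2(t) \lesssim T^{2\alpha-1}\|g\|_{L^2(D)}^2$ uniformly in $t\in[0,T]$. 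For the deterministic sum I would pull $h$ out in the sup-norm and use the identity $\int_0^t s^{\alpha-1}E_{\alpha,\alpha}(-\lambda_n s^\alpha)\,ds = (1-E_{\alpha,1}(-\lambda_n t^\alpha))/\lambda_n \leq 1/\lambda_1$ (which follows from Lemma \ref{mittag_derivative} together with Corollary \ref{mittag_positive_1}) to obtain $|I_{n,1}(t)| \leq |f_n|\,\|h\|_{L^\infty[0,T]}/\lambda_1$, giving the desired $\|h\|_{L^\infty[0,T]}^2\|f\|_{L^2(D)}^2$ control. Taking the supremum over $t\in[0,T]$ finishes the first estimate.

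For the $H^2$-estimate, the key observation is that, since every $\phi_n$ lies in $H^2(D)\cap H_0^1(D)$ and $\A$ is elliptic, one has the spectral characterization $\|v\|_{H^2(D)}^2 \simeq \sum_n \lambda_n^2 \l v,\phi_n\r_{L^2(D)}^2$ on the domain of $\A$. Applying this to $u(\cdot,t)$ gives
\begin{equation*}
\E\|u(\cdot,t)\|^2_{H^2(D)} \;\simeq\; \sum_{n=1}^\infty \lambda_n^2 \,\E\bigl(I_{n,1}(t)+I_{n,2}(t)\bigr)^2 \;\lesssim\; \sum_{n=1}^\infty \lambda_n^2 I_{n,1}^2(t) + \sum_{n=1}^\infty \lambda_n^2 \,\E I_{n,2}^2(t).
\end{equation*}
In the deterministic sum, the crucial point is that the bound $|I_{n,1}(t)|\leq |f_n|\,\|h\|_{L^\infty[0,T]}/\lambda_n$ already contains a factor $1/\lambda_n$, which cancels one power of $\lambda_n^2$. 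This yields $\lambda_n^2 I_{n,1}^2(t) \leq f_n^2\,\|h\|_{L^\infty[0,T]}^2$, whose summation gives precisely $\|h\|_{L^\infty[0,T]}^2\|f\|_{L^2(D)}^2$ — note that no smoothness of $f$ is needed, which is why the same $L^2$-norm of $f$ appears in both inequalities. For the stochastic sum, inserting the pointwise estimate $\E I_{n,2}^2(t)\lesssim g_n^2 t^{2\alpha-1}$ gives $\sum_n \lambda_n^2\,\E I_{n,2}^2(t) \lesssim T^{2\alpha-1}\sum_n \lambda_n^2 g_n^2 \simeq T^{2\alpha-1}\|g\|_{H^2(D)}^2$. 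Taking the supremum in $t$ concludes the proof.

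The only subtle step is the use of the spectral characterization $\|v\|_{H^2(D)}^2 \simeq \sum_n \lambda_n^2 v_n^2$ for $u(\cdot,t)$; this is legitimate once we verify that the series defining $\A u(\cdot,t)$ converges in $L^2(D)$, which is exactly the summability of $\sum_n \lambda_n^2 (I_{n,1}^2 + \E I_{n,2}^2)$ that we establish along the way. The rest is a direct repetition of the convolution and It\^o-isometry arguments used in Lemma \ref{regularity_L2}, with $\|h\|_{L^2(0,T)}$ replaced by $\|h\|_{L^\infty[0,T]}$ to eliminate the time integral from the deterministic contribution.
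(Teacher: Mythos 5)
Your argument is correct and follows essentially the same route as the paper: pointwise-in-$t$ bounds $|I_{n,1}(t)|\le \lambda_n^{-1}|f_n|\,\|h\|_{L^\infty[0,T]}$ and $\E I_{n,2}^2(t)\lesssim g_n^2 t^{2\alpha-1}$, combined with the spectral characterization $\|u(\cdot,t)\|_{H^2(D)}^2\simeq\|\A u(\cdot,t)\|_{L^2(D)}^2\lesssim\sum_n\lambda_n^2(I_{n,1}^2+I_{n,2}^2)$, with the factor $\lambda_n^{-1}$ in the deterministic term absorbing one power of $\lambda_n^2$ exactly as in the paper's proof. The only cosmetic difference is that you flag explicitly the summability needed to justify the spectral identity, which the paper leaves implicit.
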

\begin{proof}
 From the proof of Lemma \ref{regularity_L2} we conclude that
 \begin{equation*}
  \|u(\cdot,t)\|^2_{L^2(D)}\lesssim \sum_{n=1}^\infty [I_{n,1}(t)]^2
  +\sum_{n=1}^\infty [I_{n,2}(t)]^2
  \end{equation*}
  and
  \begin{equation*}
   \|u(\cdot,t)\|^2_{H^2(D)} \simeq \|{\mathcal A}u(\cdot,t)\|^2_{L^2(D)}\lesssim \sum_{n=1}^\infty \lambda_n^2 [I_{n,1}(t)]^2
  +\sum_{n=1}^\infty \lambda_n^2 [I_{n,2}(t)]^2.
 \end{equation*}
Similar to the proof of the previous lemma, we have
\begin{equation*}
 |I_{n,1}(t)|= | f_n (G_{\alpha,n}* h)(t)|
\le |f_n|\cdot\|h\|_{L^\infty[0,T]}\int_0^t G_{\alpha,n}(\tau) d\tau
\le \frac 1{\lambda_n}|f_n|\cdot\|h\|_{L^\infty[0,T]}
\end{equation*}
and
 \begin{equation*}
   \E I^2_{n,2}(t) \le Cg_n^2t^{2\alpha-1}.
 \end{equation*}
Hence, we can deduce that 
\begin{eqnarray*}
 \sup_{0\le t\le T}\E\left[\|u(\cdot,t)\|^2_{L^2(D)}\right]
 & \lesssim & \Big(\|h\|_{L^\infty[0,T]}^2\sum_{n=1}^\infty f_n^2+T^{2\alpha-1}
 \sum_{n=1}^\infty g_n^2\Big)\\
 & =& \Big(\|h\|_{L^\infty[0,T]}^2\|f\|^2_{L^2(D)}+T^{2\alpha-1}\|g\|^2_{L^2(D)}\Big)
\end{eqnarray*}
and
\begin{eqnarray*}
  \sup_{0\le t\le T}\E\left[\|u(\cdot,t)\|^2_{H^2(D)}\right]
  & \lesssim & \Big(\|h\|_{L^\infty[0,T]}^2\sum_{n=1}^\infty f_n^2+T^{2\alpha-1}
 \sum_{n=1}^\infty \lambda_n^2g_n^2\Big)\\
 & \lesssim & \Big(\|h\|_{L^\infty[0,T]}^2\|f\|^2_{L^2(D)}+
 T^{2\alpha-1}\|g\|^2_{H^2(D)}\Big).
\end{eqnarray*}
This completes the proof.
\end{proof}

\section{Reconstruction of $f$ and $|g|$ from the final time correlations}
\label{sec:ip}

In this section we consider the inverse problem of reconstructing $f$ and $|g|$.
Let $X,Y : \Omega \to \R$ be random variables on some complete probability space. 
Below, we write
$\V (X) := \E (X-\E X)^2$
and
\begin{equation*}
 \C(X,Y):=\E(X-\E X)(Y-\E Y )
\end{equation*}
for the variance and covariance, respectively.
We assume that our data is partial information regarding 
the distributions of random variables $u_n(T)$ defined by
$$u_n(T):=\l u(\cdot,T),\phi_n(\cdot)\r_{L^2(D)}$$ 
for any $\N+$.

\subsection{Stability of the reconstruction}
From Definition \ref{mild_solution} and Lemma \ref{Ito isometry formula} it follows that
the final time expectation and variance can be formulated as
 \begin{equation}\label{EV2}
 \begin{aligned}
 \E u_n(T) &=f_n\int_0^T \tau^{\alpha-1} 
 E_{\alpha,\alpha}(-\lambda_n\tau^\alpha) h(T-\tau)\ d\tau,\\
 \V (u_n(T))&=g_n^2\int_0^T \tau^{2\alpha-2} 
 [E_{\alpha,\alpha}(-\lambda_n\tau^\alpha)]^2\ d\tau
 \end{aligned}
\end{equation}
for any $\N+$. To show the stability result, we deduce the coming lemma at first.
\begin{lemma}\label{estimate_ge}
For each $\N+$, there exists a constant $C>0$ independent of $n$ such that  
 \begin{equation*}
   \left|\int_0^T \tau^{\alpha-1}E_{\alpha,\alpha}(-\lambda_n\tau^\alpha)
   h(T-\tau)\ d\tau\right| \ge C\lambda_n^{-1}
 \end{equation*}
 and
 \begin{equation*}
   \left|\int_0^T \tau^{2\alpha-2}[E_{\alpha,\alpha}(-\lambda_n\tau^\alpha)]^2
   \ d\tau\right| \ge C\lambda_n^{-2}.
 \end{equation*}
\end{lemma}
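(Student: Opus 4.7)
The plan is to treat the two inequalities separately, both exploiting the non-negativity and monotonicity properties of the Mittag--Leffler function recorded in Lemma \ref{mittag_positive} and Corollary \ref{mittag_positive_1}.

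For the first inequality, the idea is to drop the factor $h(T-\tau)$ to its uniform lower bound $c_h>0$ supplied by Assumption \ref{inverse}. This is legal because $\tau^{\alpha-1}$ and $E_{\alpha,\alpha}(-\lambda_n \tau^\alpha)$ are both non-negative on $(0,T]$, so the problem reduces to estimating the deterministic integral
$$\int_0^T \tau^{\alpha-1} E_{\alpha,\alpha}(-\lambda_n \tau^\alpha)\, d\tau = \frac{1 - E_{\alpha,1}(-\lambda_n T^\alpha)}{\lambda_n},$$
the identity already used in the proof of Lemma \ref{regularity_L2} (and derivable from Lemma \ref{mittag_derivative}). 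To get a uniform-in-$n$ positive lower bound for the numerator, I would invoke the complete monotonicity of $t\mapsto E_{\alpha,1}(-t)$ (Lemma \ref{mittag_positive}), which forces $E_{\alpha,1}(-\lambda_n T^\alpha) \leq E_{\alpha,1}(-\lambda_1 T^\alpha) < 1$; the strict inequality at $n=1$ follows from $E_{\alpha,1}(0)=1$ combined with Lemma \ref{mittag_derivative}, which shows the derivative is strictly negative on $(0,\infty)$. This delivers the bound with $C = c_h\bigl(1 - E_{\alpha,1}(-\lambda_1 T^\alpha)\bigr)$.

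For the second inequality, I would perform the substitution $s = \lambda_n \tau^\alpha$ to factor the full $n$-dependence out as an explicit power of $\lambda_n$. A routine computation gives
$$\int_0^T \tau^{2\alpha-2} [E_{\alpha,\alpha}(-\lambda_n \tau^\alpha)]^2\, d\tau = \frac{\lambda_n^{1/\alpha - 2}}{\alpha} \int_0^{\lambda_n T^\alpha} s^{1 - 1/\alpha} [E_{\alpha,\alpha}(-s)]^2\, ds.$$
The transformed integrand is non-negative, so the inner integral is non-decreasing in $n$ and hence bounded below by the positive constant $J_1 := \int_0^{\lambda_1 T^\alpha} s^{1-1/\alpha} [E_{\alpha,\alpha}(-s)]^2\, ds$. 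Finiteness of $J_1$ at the origin is exactly where the standing hypothesis $\alpha > 1/2$ plays its role, as it guarantees $1 - 1/\alpha > -1$; positivity follows from the strict positivity of the completely monotonic function $E_{\alpha,\alpha}(-\cdot)$ on a neighbourhood of the origin. Finally, since $\lambda_n \geq \lambda_1$ and $1/\alpha > 0$, one has $\lambda_n^{1/\alpha - 2} \geq \lambda_1^{1/\alpha} \lambda_n^{-2}$, which produces the claimed lower bound with $C = \alpha^{-1} \lambda_1^{1/\alpha} J_1$.

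I expect the only real subtlety to be in the second inequality: the scaling argument naturally produces the faster decay rate $\lambda_n^{1/\alpha-2}$, which is stronger than what the lemma claims, and the conversion to the weaker but cleaner rate $\lambda_n^{-2}$ relies on the eigenvalues being bounded away from zero by $\lambda_1$. Apart from this bookkeeping and the careful use of $\alpha>1/2$ to secure integrability at the origin after the substitution, no analytic ingredient beyond the Mittag--Leffler properties in Section \ref{sec:prelim} is required.
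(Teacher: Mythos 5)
Your proof is correct. For the first inequality you follow essentially the paper's own route: bound $h$ from below by $c_h$, evaluate the remaining integral in closed form as $\lambda_n^{-1}\bigl(1-E_{\alpha,1}(-\lambda_n T^\alpha)\bigr)$ via Lemma \ref{mittag_derivative}, and use the monotonicity of $E_{\alpha,1}(-\cdot)$ together with $\lambda_n\ge\lambda_1$ to make the bracket uniformly positive. For the second inequality your route is genuinely different from the paper's, and in fact more robust. The paper pulls the decreasing factor $[E_{\alpha,\alpha}(-\lambda_n\tau^\alpha)]^2$ out of the integral at its minimum $\tau=T$ and then asserts $[E_{\alpha,\alpha}(-\lambda_n T^\alpha)]^2\gtrsim \lambda_n^{-2}T^{-2\alpha}$; that step would require a lower bound of the form $E_{\alpha,\alpha}(-t)\gtrsim (1+t)^{-1}$, which Lemma \ref{mittag_bound} does not supply (it is an upper bound) and which actually fails for $\beta=\alpha$, since the $k=1$ term of the asymptotic expansion is annihilated by $1/\Gamma(0)=0$ and $E_{\alpha,\alpha}(-t)=O(t^{-2})$ as $t\to\infty$. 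Your substitution $s=\lambda_n\tau^\alpha$ sidesteps this entirely: it isolates the $n$-dependence in the explicit prefactor $\lambda_n^{1/\alpha-2}$ and reduces the rest to the single constant $J_1$, whose finiteness is precisely where $\alpha>1/2$ enters and whose positivity follows from $E_{\alpha,\alpha}(0)=1/\Gamma(\alpha)>0$. This gives the bound $\gtrsim\lambda_n^{1/\alpha-2}$, which is a \emph{slower} decay and hence a \emph{stronger} lower bound than the stated $\lambda_n^{-2}$ (your phrase ``faster decay rate'' is a terminological slip, but the conversion $\lambda_n^{1/\alpha-2}\ge\lambda_1^{1/\alpha}\lambda_n^{-2}$ is exactly right). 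In short: part one matches the paper; part two is a different, sharper argument that, unlike the paper's, is fully justified by the lemmas of Section \ref{sec:prelim}.
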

\begin{proof}
For the first estimate, by Lemma \ref{mittag_derivative} and Assumption \ref{inverse}, 
we obtain
\begin{equation*}
\begin{aligned}
 \left|\int_0^T \tau^{\alpha-1} E_{\alpha,\alpha}
  (-\lambda_n\tau^\alpha) h(T-\tau)\ d\tau\right|
  &\ge c_h \int_0^T \tau^{\alpha-1} E_{\alpha,\alpha}
  (-\lambda_n\tau^\alpha)\ d\tau\\  
 &=c_h \lambda_n^{-1} [1-E_{\alpha,1}(-\lambda_nT^\alpha)]\\
  &\ge c_h \lambda_n^{-1} [1-E_{\alpha,1}(-\lambda_1T^\alpha)]\\
  &   \gtrsim \lambda_n^{-1}.
\end{aligned} 
\end{equation*}
For the second one, Lemmas \ref{mittag_bound} and \ref{mittag_derivative} yield 
\begin{equation*}
 \begin{aligned}
  \left|\int_0^T \tau^{2\alpha-2}[E_{\alpha,\alpha}(-\lambda_n\tau^\alpha)]^2
   \ d\tau\right|
  &\ge [E_{\alpha,\alpha}(-\lambda_nT^\alpha)]^2\int_0^T \tau^{2\alpha-2}
   \ d\tau\\
  &\gtrsim T^{2\alpha-1} \lambda_n^{-2}T^{-2\alpha} \\
  &\gtrsim \lambda_n^{-2}
 \end{aligned}
\end{equation*}
and complete the proof.
\end{proof}

Now a stability result follows in a straightforward manner.
\begin{theorem}[Stability]\label{stability}
Suppose Assumption \ref{inverse} is satisfied and, in addition, $g\in H^2(D)$.
Then there exists a constant $C>0$ such that
 \begin{equation*}
 \|f\|_{L^2(D)}^2 + \|g\|_{L^2(D)}^2 \le C\E\|u(\cdot,T)\|_{H^2(D)}^2.
\end{equation*}
\end{theorem}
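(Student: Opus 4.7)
The plan is to pass to the eigenbasis $\{\phi_n\}$ and exploit the fact that the deterministic and stochastic contributions to $u_n(T) = \langle u(\cdot,T), \phi_n\rangle_{L^2(D)}$ separate cleanly into the mean and variance, respectively. Using the elliptic regularity equivalence $\|v\|_{H^2(D)}^2 \simeq \|\A v\|_{L^2(D)}^2$ for $v \in H^2(D)\cap H_0^1(D)$, together with Parseval's identity in the eigenbasis, I would first observe that
\begin{equation*}
\E \|u(\cdot,T)\|_{H^2(D)}^2 \simeq \sum_{n=1}^\infty \lambda_n^2 \, \E u_n(T)^2.
\end{equation*}

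Next, since $I_{n,1}(T)$ is deterministic and $I_{n,2}(T)$ is a mean-zero It\^o integral of a deterministic integrand, Definition \ref{mild_solution} yields $\E u_n(T) = I_{n,1}(T)$ and $\V(u_n(T)) = \E I_{n,2}(T)^2$. The identity $\E u_n(T)^2 = (\E u_n(T))^2 + \V(u_n(T))$ combined with \eqref{EV2} then gives
\begin{equation*}
\E u_n(T)^2 = f_n^2 \left(\int_0^T \tau^{\alpha-1} E_{\alpha,\alpha}(-\lambda_n\tau^\alpha) h(T-\tau)\,d\tau\right)^2 + g_n^2 \int_0^T \tau^{2\alpha-2}[E_{\alpha,\alpha}(-\lambda_n\tau^\alpha)]^2\,d\tau.
\end{equation*}

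The two lower bounds supplied by Lemma \ref{estimate_ge} apply term by term to yield $\E u_n(T)^2 \gtrsim \lambda_n^{-2}(f_n^2 + g_n^2)$ with a constant independent of $n$. Multiplying by $\lambda_n^2$ and summing gives
\begin{equation*}
\E \|u(\cdot,T)\|_{H^2(D)}^2 \gtrsim \sum_{n=1}^\infty (f_n^2 + g_n^2) = \|f\|_{L^2(D)}^2 + \|g\|_{L^2(D)}^2,
\end{equation*}
which is the desired estimate after taking reciprocal constants.

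Since Lemma \ref{estimate_ge} has already done the main analytic work of controlling the relevant Mittag--Leffler integrals from below uniformly in $n$, the remaining argument is essentially bookkeeping in the eigenbasis; I do not expect any serious obstacle here. The one subtle point worth articulating is the role of the hypothesis $g \in H^2(D)$: by Lemma \ref{regularity_infty} it guarantees finiteness of the right-hand side, so that the stability inequality is nontrivial; without it the bound would still hold formally but could be vacuous.
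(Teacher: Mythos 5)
Your proposal is correct and follows essentially the same route as the paper's own proof: pass to the eigenbasis, use the identity $\E (u_n(T))^2=(\E u_n(T))^2+\V(u_n(T))$ together with the explicit formulas \eqref{EV2}, invoke the lower bounds of Lemma \ref{estimate_ge} termwise, and sum using $\sum_n \lambda_n^2\,\E(u_n(T))^2\lesssim \E\|u(\cdot,T)\|_{H^2(D)}^2$. Your closing remark on the role of the hypothesis $g\in H^2(D)$ (guaranteeing via Lemma \ref{regularity_infty} that the right-hand side is finite, so the estimate is not vacuous) is a worthwhile clarification that the paper leaves implicit.
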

\begin{proof}
 Lemma \ref{estimate_ge} and the Jensen inequality yield that 
 \begin{equation*}
 f_n^2 + g_n^2 \lesssim \lambda_n^2 \left(\left(\E u_n(T)\right)^2 + \V (u_n(T))\right) =  \lambda_n^2
 \E (u_n(T))^2.
 \end{equation*}
Therefore, it follows that 
\begin{equation*}
\|f\|_{L^2(D)}^2 + \|g\|_{L^2(D)}^2 = \sum_{n=1}^\infty (f_n^2+ g_n^2) \lesssim \sum_{n=1}^\infty
\lambda_n^2
 \E (u_n(T))^2 \lesssim \E\|u(\cdot,T)\|_{H^2(D)}^2.
\end{equation*} 
The proof is complete.
\end{proof}

\subsection{Uniqueness of the reconstruction}

As discussed above, the stochastic FDE in \eqref{SFDE} is invariant with respect to the sign of $g$. Therefore, the observations of the final time do not contain information regarding the sign.
However, notice carefully that the observed expectation and variance do not ensure uniqueness for $|g|$, since each process $\langle u, \phi_n\rangle_{L^2(D)}$ is invariant to the sign of $g_n$ independently. As we will see below, the cross-covariance between $u_n(T)$ and $u_k(T)$ for $k\neq n$ adds the crucial information to the system since the random white noise in \eqref{SFDE} is only time-dependent.

By our Definition \ref{mild_solution} and Lemma \ref{Ito isometry formula}, the covariance 
$\C(u_{m}(T),u_n(T))$ is given by identity
\begin{equation}
	\label{cov_expression}
 \C(u_{m}(T),u_n(T))
=g_{m}g_n\int_0^T \tau^{2\alpha-2} E_{\alpha,\alpha}(-\lambda_{m}\tau^\alpha)
 E_{\alpha,\alpha}(-\lambda_n\tau^\alpha)\ d\tau
\end{equation}
for any $m,n\in {\mathbb N}^+$.
A uniqueness result can now be provided as follows.
\begin{theorem}[Uniqueness]\label{Uniqueness}
 Suppose Assumption \ref{inverse} holds and $g \in H^2(D)$. Moreover, 
 let $N_0$ be an index such that $\langle g, \phi_{N_0}\rangle_{L^2(D)} \neq 0$. 
 The expectation of the final time solution and the correlations at $N_0$, i.e.  
 the quantities
 \begin{equation*}
 	\label{fulldata}
 \left \{\E [u_n(T)],  \V (u_{N_0}(T)), \C(u_{N_0}(T), u_n(T)) \; : \; \N+\right\}
 \end{equation*}
determine the source terms $f$ and $|g|$ uniquely.
\end{theorem}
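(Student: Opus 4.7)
The plan is to decouple the reconstruction of $f$ from that of $|g|$ by exploiting the linear–in–data expressions in \eqref{EV2} and \eqref{cov_expression}, and then to show that each Fourier coefficient $f_n$ is pinned down by the expectation data while the coefficients $g_n$ are pinned down, up to a single global sign, by the variance at $N_0$ together with the cross–covariances with $N_0$.

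First I would fix $n\in\mathbb{N}^+$ and rewrite the formula for $\E u_n(T)$ in \eqref{EV2} as
\begin{equation*}
\E u_n(T) \;=\; f_n\cdot K_n, \qquad K_n := \int_0^T \tau^{\alpha-1}E_{\alpha,\alpha}(-\lambda_n\tau^\alpha)\,h(T-\tau)\,d\tau.
\end{equation*}
By Lemma \ref{estimate_ge}, $|K_n|\gtrsim \lambda_n^{-1}>0$, so $f_n = \E u_n(T)/K_n$ is uniquely determined for every $n$. Since $\{\phi_n\}$ is an orthonormal basis of $L^2(D)$ and Assumption \ref{inverse} places $f\in L^2(D)$, this recovers $f$ uniquely.

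Next I would recover the magnitude of the leading coefficient $g_{N_0}$ from the variance. Writing $\V(u_{N_0}(T)) = g_{N_0}^2\cdot M_{N_0,N_0}$, where
\begin{equation*}
M_{m,n} := \int_0^T \tau^{2\alpha-2}E_{\alpha,\alpha}(-\lambda_m\tau^\alpha)E_{\alpha,\alpha}(-\lambda_n\tau^\alpha)\,d\tau,
\end{equation*}
the second bound of Lemma \ref{estimate_ge} gives $M_{N_0,N_0}>0$, so $|g_{N_0}|$ is determined. Fix an arbitrary sign $\epsilon\in\{\pm 1\}$ and set $\tilde g_{N_0}:=\epsilon|g_{N_0}|$; then for every $n\in\mathbb{N}^+$ the cross–covariance identity \eqref{cov_expression} reads $\C(u_{N_0}(T),u_n(T)) = g_{N_0}\,g_n\,M_{N_0,n}$, so
\begin{equation*}
\tilde g_n := \epsilon\,g_n \;=\; \frac{\C(u_{N_0}(T),u_n(T))}{\tilde g_{N_0}\,M_{N_0,n}}
\end{equation*}
is well defined provided $M_{N_0,n}\neq 0$. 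Here the key positivity step is that, by Corollary \ref{mittag_positive_1} together with the strict positivity of completely monotonic functions that do not vanish at the origin (note $E_{\alpha,\alpha}(0)=1/\Gamma(\alpha)>0$ and Lemma \ref{mittag_positive}), the integrand in $M_{N_0,n}$ is strictly positive on $(0,T]$; hence $M_{N_0,n}>0$ for every $n$.

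This construction recovers the whole sequence $\{\tilde g_n\}_{n\ge 1}$, and the only remaining ambiguity is the global sign $\epsilon$. Since the two candidates $\sum_n \tilde g_n\phi_n$ and $-\sum_n \tilde g_n \phi_n$ differ only by a global sign, their pointwise absolute values coincide, so $|g|$ is uniquely determined. The main obstacle in the argument is the verification that $M_{N_0,n}\neq 0$ for all $n$, which is handled by the complete monotonicity results from Section \ref{sec:prelim}; everything else reduces to inverting the scalar relations \eqref{EV2} and \eqref{cov_expression} term by term in the eigenbasis.
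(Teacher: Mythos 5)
Your proposal is correct and follows essentially the same route as the paper's proof: invert the scalar relations \eqref{EV2} and \eqref{cov_expression} coefficient by coefficient, recovering $f_n$ from the expectation, $|g_{N_0}|$ from the variance, and the remaining $g_n$ (up to one global sign) from the cross-covariances with the $N_0$-th mode. The only difference is cosmetic --- you justify the strict positivity of the denominator $M_{N_0,n}$ explicitly via complete monotonicity of $E_{\alpha,\alpha}(-t)$, where the paper simply asserts it --- so no further comparison is needed.
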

\begin{proof}
First, we clearly have
\begin{equation*}\label{f_n}
  f_n =\frac{\E u_n(T)}{\int_0^T \tau^{\alpha-1} 
 E_{\alpha,\alpha}(-\lambda_n\tau^\alpha) h(T-\tau)\ d\tau},
\end{equation*}
which is well-defined due to Assumption \ref{inverse}.

Second, due to the assumption on $N_0$, the variance $\V (u_{N_0}(T))$ yields $|g_{N_0}|$ up to the sign from equation \eqref{EV2}. For convenience, we pick the positive solution of $g_{N_0}.$
It follows that
\begin{equation*}\label{g_n}
  g_n =\frac{\C(u_{N_0}(T), u_n(T))}{g_{N_0}\int_0^T \tau^{2\alpha-2} 
 E_{\alpha,\alpha}(-\lambda_{N_0}\tau^\alpha)E_{\alpha,\alpha}(-\lambda_n\tau^\alpha)
 \ d\tau}.
\end{equation*}
The integral in the denominator is strictly positive due to $g_{N_0}>0$. 
\end{proof}

Ill-posedness of the recovery can be characterized by the following lemma.
\begin{lemma}\label{ill-posedness}
There exists a constant $C$ which is independent of $n$ such that  
 \begin{equation*}
   \left|\int_0^T \tau^{\alpha-1}E_{\alpha,\alpha}(-\lambda_n\tau^\alpha)
   h(T-\tau)\ d\tau\right| \le C\lambda_n^{-1}
   \end{equation*}
   and
   \begin{equation*}
   \left|\int_0^T \tau^{2\alpha-2}  E_{\alpha,\alpha}(-\lambda_{N_0}\tau^\alpha)
   E_{\alpha,\alpha}(-\lambda_n\tau^\alpha)\ d\tau\right| 
   \le C\lambda_n^{-1+\frac{1}{2\alpha}}.
 \end{equation*}
\end{lemma}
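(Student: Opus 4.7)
The plan is to treat the two inequalities separately. The first is essentially a direct consequence of the integral identity already used in Lemma \ref{regularity_L2}; the second requires a Cauchy--Schwarz split together with a scaling argument that reduces the problem to a universal Mittag--Leffler integral on $(0,\infty)$.

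For the first bound I would use the nonnegativity of $E_{\alpha,\alpha}(-\lambda_n\tau^\alpha)$ from Corollary \ref{mittag_positive_1} to pull $\|h\|_{L^\infty(0,T)}$ outside the integral. Lemma \ref{mittag_derivative} then gives
\[
\int_0^T \tau^{\alpha-1} E_{\alpha,\alpha}(-\lambda_n\tau^\alpha)\, d\tau = \frac{1 - E_{\alpha,1}(-\lambda_n T^\alpha)}{\lambda_n},
\]
and since $t\mapsto E_{\alpha,1}(-t)$ is completely monotonic with $E_{\alpha,1}(0)=1$ by Lemma \ref{mittag_positive}, the numerator lies in $[0,1]$, yielding the bound $C\lambda_n^{-1}$.

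For the second bound I plan to apply the Cauchy--Schwarz inequality,
\[
\left|\int_0^T \tau^{2\alpha-2} E_{\alpha,\alpha}(-\lambda_{N_0}\tau^\alpha) E_{\alpha,\alpha}(-\lambda_n\tau^\alpha)\, d\tau\right|^2 \le J_{N_0}\, J_n,
\]
where $J_m = \int_0^T \tau^{2\alpha-2}[E_{\alpha,\alpha}(-\lambda_m\tau^\alpha)]^2\, d\tau$. Since $\lambda_{N_0}$ is fixed, $J_{N_0}$ contributes only a harmless constant, and it remains to extract the $\lambda_n$-dependence of $J_n$. The substitution $s = \lambda_n\tau^\alpha$ gives
\[
J_n = \frac{\lambda_n^{1/\alpha - 2}}{\alpha}\int_0^{\lambda_n T^\alpha} s^{1 - 1/\alpha}[E_{\alpha,\alpha}(-s)]^2\, ds,
\]
so provided the $s$-integral is uniformly bounded in $n$, one has $J_n \lesssim \lambda_n^{1/\alpha - 2}$, and Cauchy--Schwarz then produces exactly the claimed exponent $-1 + \frac{1}{2\alpha}$.

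The main obstacle is the uniform boundedness of $\int_0^\infty s^{1 - 1/\alpha}[E_{\alpha,\alpha}(-s)]^2\, ds$. Integrability near $s = 0$ is where the hypothesis $\alpha > 1/2$ becomes essential, since it forces $1 - 1/\alpha > -1$, while the boundedness of $E_{\alpha,\alpha}$ at the origin is immediate from its series. For integrability at infinity, the crude bound $(1+s)^{-1}$ from Lemma \ref{mittag_bound} does not suffice; instead I would invoke the asymptotic formula in the same lemma with $p = 2$, noting that the $k = 1$ term vanishes because $1/\Gamma(\alpha - \alpha) = 1/\Gamma(0) = 0$. Consequently $E_{\alpha,\alpha}(-s) = O(s^{-2})$ and $s^{1 - 1/\alpha}[E_{\alpha,\alpha}(-s)]^2 = O(s^{-3 - 1/\alpha})$ at infinity, which is integrable. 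Combining these two convergence observations with the identities above completes the argument.
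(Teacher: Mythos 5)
Your proof is correct, and for the second estimate it takes a genuinely different route from the paper. The first inequality is handled identically (positivity of $E_{\alpha,\alpha}$, pull out $\|h\|_{L^\infty}$, integrate via Lemma \ref{mittag_derivative}). For the second, the paper does not use Cauchy--Schwarz: it keeps the cross term, splits the integral at $t_*=\lambda_n^{-1/(2\alpha)}$, bounds $E_{\alpha,\alpha}(-\lambda_n\tau^\alpha)$ by a constant on $(0,t_*)$ and by $C/(\lambda_n\tau^\alpha)$ on $(t_*,T)$ using Lemma \ref{mittag_bound}, and observes that this choice of $t_*$ balances the two contributions, each of order $\lambda_n^{-1+\frac{1}{2\alpha}}$. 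Your decomposition $|I|^2\le J_{N_0}J_n$ followed by the substitution $s=\lambda_n\tau^\alpha$ is cleaner in two respects: it isolates the $\lambda_n$-dependence in a single scaling factor $\lambda_n^{1/\alpha-2}$ (whose square root is exactly the claimed exponent), and it makes completely transparent that $\alpha>1/2$ is needed for integrability of $s^{1-1/\alpha}$ at the origin. The paper's split, on the other hand, shows explicitly where the integral concentrates and that the exponent is sharp for this estimation strategy. One small correction to your argument: the crude bound $|E_{\alpha,\alpha}(-s)|\le C(1+s)^{-1}$ in fact \emph{does} suffice at infinity, since it gives an integrand of order $s^{1-1/\alpha}\cdot s^{-2}=s^{-1-1/\alpha}$ with $-1-1/\alpha<-1$; the appeal to the $p=2$ asymptotic expansion and the vanishing of the $k=1$ term (via $1/\Gamma(0)=0$) is correct but unnecessary here.
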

\begin{proof}
Lemma \ref{mittag_derivative} implies that
 \begin{eqnarray*}
  \left|  \int_0^T \tau^{\alpha-1} E_{\alpha,\alpha}
  (-\lambda_n\tau^\alpha) h(T-\tau)\ d\tau\right|
  &\le & \|h\|_{L^\infty(0,T)} \int_0^T \tau^{\alpha-1} E_{\alpha,\alpha}
  (-\lambda_n\tau^\alpha)\ d\tau\\
  &=& \|h\|_{L^\infty(0,T)} \lambda_n^{-1} (1-E_{\alpha,1}(-\lambda_nT^\alpha)) \\
  &\lesssim &\lambda_n^{-1}.
 \end{eqnarray*}
On the other hand, if we let 
\begin{equation}
\label{tstar}
t_*=\lambda_n^{-\frac{1}{2\alpha}}
\end{equation}
and use Lemma \ref{mittag_bound}
to obtain
\begin{equation*}
	E_{\alpha,\alpha}(-\lambda_n t^{\alpha}) \leq 
	\begin{cases}
	C & {\rm for} \; t<t_* \;{\rm and} \\
	\frac{C}{\lambda_n t^\alpha} & {\rm for} \; t\geq t_*,
	\end{cases}
\end{equation*}
then it follows that
\begin{align*}
  \Bigg|\int_0^T \tau^{2\alpha-2}  E_{\alpha,\alpha}(-\lambda_{N_0}\tau^\alpha)
 & E_{\alpha,\alpha}(-\lambda_n\tau^\alpha)\ d\tau\Bigg| \\
 & =\int_0^{t_*} \tau^{2\alpha-2} E_{\alpha,\alpha}(-\lambda_{N_0}\tau^\alpha)
 E_{\alpha,\alpha}(-\lambda_n\tau^\alpha) \ d\tau \\
 &  \quad+\int_{t_*}^T \tau^{2\alpha-2} E_{\alpha,\alpha}(-\lambda_{N_0}\tau^\alpha)
 E_{\alpha,\alpha}(-\lambda_n\tau^\alpha)\ d\tau\\
 & \le \int_0^{t_*} \tau^{2\alpha-2} C^2\ d\tau 
 +\int_{t_*}^T \tau^{2\alpha-2}C\tau^{-\alpha}(\lambda_n\tau^\alpha)^{-1}\ d\tau\\
 &\lesssim  t_*^{2\alpha-1}+\lambda_n^{-1}t_*^{-1}
 -\lambda_n^{-1}T^{-1} \\
 & \lesssim \lambda_n^{-1+\frac{1}{2\alpha}}.
\end{align*}
Above, we find that the choice in \eqref{tstar} optimizes the rate. This completes the proof.
\end{proof}

\section{Numerical reconstruction}\label{numerical}
In this section we illustrate the practical solvability of the inverse problem 
by numerical demonstrations. We consider to reconstruct $f$ and $|g|$ 
in the finite dimensional space 
\begin{equation*}
 \SN:=\S\{\phi_n:1\le n\le N\},
\end{equation*}
where $\phi_n$ are the eigenfunctions of ${\mathcal A}$, 
and denote the approximations of $f$ and $g$ as   
\begin{equation*}
f_N(x)=\sum_{n=1}^N f_n\phi_n(x),\quad g_N(x)=\sum_{n=1}^N g_n \phi_n(x).
\end{equation*}
Also the vector formulations of $f_N$ and $g_N$ can be given as  
\begin{equation*}
 \vec{f}_N=
 \begin{bmatrix}
  f_1&f_2&\cdots& f_N
 \end{bmatrix},
 \quad
 \vec{g}_N=
 \begin{bmatrix}
  g_1&g_2&\cdots& g_N
 \end{bmatrix}.
\end{equation*}

The domain $D$ is set to be the unit circle in $\mathbb{R}^2$ and we let 
$\A=-\Delta$, then it follows that the eigenfunctions 
of $\A$ are given by
\begin{equation*}
\phi_n(r,\theta)=w_nJ_m(\sqrt{\lambda_n}r)\cos{(m\theta+d_n)}, 
\end{equation*} 
where $(r,\theta)$ are the polar coordinates on $D$, 
the phase $d_n$ is either $0$ or $\pi/2,$ $w_n$ is the 
normalized weight factor and $J_m(z)$ is the first kind Bessel function with degree $m$. 
The eigenvalues $\{\lambda_n:\N+\}$ are the squares of the zeros of the 
class of Bessel functions $\{J_m(z):m\in\mathbb{N}\}$ and indexed by $n$ with 
nondecreasing order. Hence, we can see the index $m$ is a function of $n,$ 
i.e. $m=m(n).$ The set $\{\lambda_n:\N+\}$ can be solved numerically and 
satisfy $\lambda_j \simeq j^2.$
The data used in all examples below is simulated and the forward solver being used is based on a 
finite difference scheme. We run the forward solver $10^3$ times for different realizations of the source term and 
average the final time data $u(x,T)$ to get the approximatively exact data 
$\widehat{{\bf E}},\widehat{{\bf C}}$. Lastly, we generate the noisy data 
$\widehat{{\bf E}}^\delta,\widehat{{\bf C}}^\delta$ for all examples by adding
 $1\%$ relative noise. 

We consider the two experiments $(e1)$ and $(e2)$, where
we use the following source terms:
\begin{equation*}
 \begin{aligned}
  (e1):\quad f(r,\theta)=&10w_1J_{m(1)}(\sqrt{\lambda_1}r)\cos{(m(1)\theta)}+
  5w_2J_{m(2)}(\sqrt{\lambda_2}r)\cos{(m(2)\theta)}\\
  &+12w_2J_{m(2)}(\sqrt{\lambda_2}r)\sin{(m(2)\theta)},\\
  g(r,\theta)=&10w_1J_{m(1)}(\sqrt{\lambda_1}r)\cos{(m(1)\theta)}+
  2w_2J_{m(2)}(\sqrt{\lambda_2}r)\cos{(m(2)\theta)}\\
  &+13w_2J_{m(2)}(\sqrt{\lambda_2}r)\sin{(m(2)\theta)};\\
  (e2):\quad f(x,y)=&6\chi_{_{[(x-0.3)^2+0.5(y-0.2)^2<0.2^2]}},\\
  g(x,y)=&-3\chi_{_{[0.3(x+0.4)^2+(y+0.3)^2<0.15^2]}}.
 \end{aligned}
\end{equation*}
The source terms in $(e1)$ and $(e2)$ are represented in Figure \ref{exact}.
\begin{figure}[th!]
	\center
\begin{subfigure}
  \centering
\includegraphics[trim = .5cm .5cm .5cm .8cm, clip=true,height=6.2cm,width=16.2cm]
		{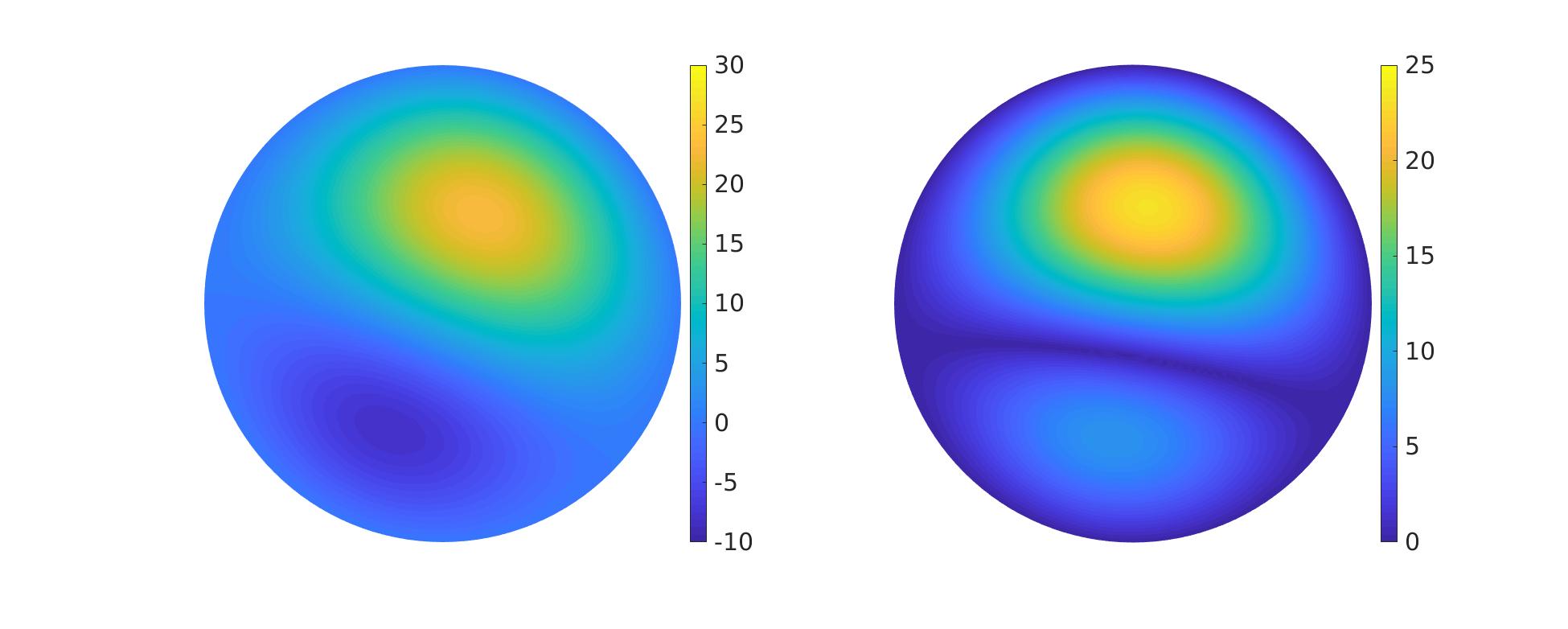}
\end{subfigure}
\\
\begin{subfigure}
  \centering
\includegraphics[trim = .5cm .5cm .5cm .8cm, clip=true,height=6.2cm,width=16.2cm]
		{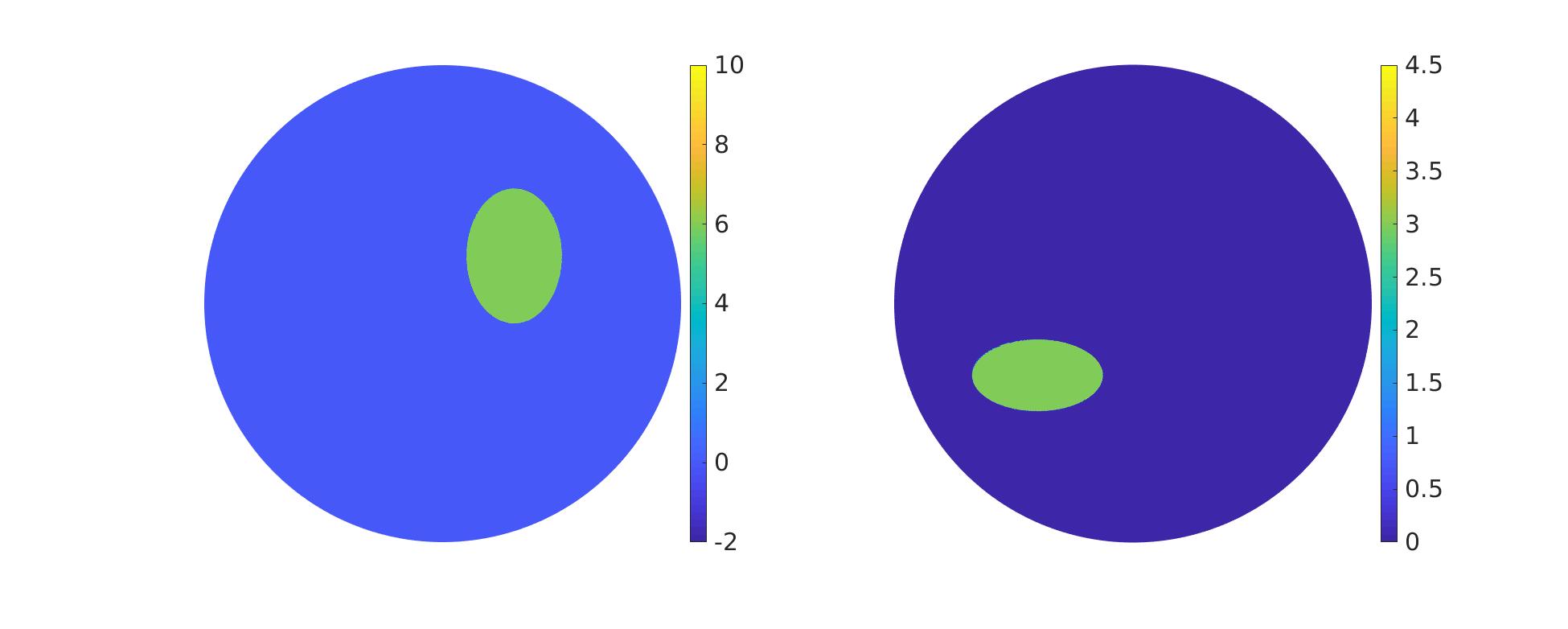}
\end{subfigure}
\caption{\small Exact solutions of $(e1)$ (top) and $(e2)$ (bottom): 
$f$ (left), $|g|$ (right).}
\label{exact}
\end{figure}

\subsection{Data acquisition and finite-dimensional data}

In practise, the data acquisition is unlikely to happen in the basis
$\phi_n$ indicated by ${\mathcal A}$.
For example, the fact that functions $\phi_n$ are not local can be restrictive, 
if the observations are limited to a strict subset 
$D_{mea}\subset D$. To accommodate this thought, suppose our data is given on 
the basis functions of a finite dimensional subspace $\WSN \subset L^2(D)$ such that
\begin{equation*}
 \WSN=\S\{\psi_n(x):n=1,... ,K\},
\end{equation*}
and our data is given by
 \begin{equation*}
 	\label{partialdata}
  \{\E \hun ,  \C(\widehat{u}_k(T), \widehat{u}_\ell(T)) \; : \; 
  k\in {\mathcal I}, \ell \in {\mathcal J}\; {\rm and} \; 
  n \in {\mathcal I}\cup {\mathcal J}\}
 \end{equation*}
 where $\hun = \langle u(T), \psi_n\rangle_{L^2(D)}$ and 
 ${\mathcal I},{\mathcal J} \subset \{1,...,K\}$ are some index subsets. 
  For convenience, we assume that ${\mathcal I} ={\mathcal J} = \{1,...,K\}$ 
  and, therefore, omit denoting the dependence on the index sets. \\

%
%
%


{\bf Source-to-expectation mapping.} Writing $\E \widehat{u}_n(T)$ in the $\{\phi_k\}_{k=1}^\infty$ basis yields
\begin{eqnarray*}
 \E \widehat{u}_n(T) &= &  \sum_{k=1}^\infty \langle \psi_n, \phi_k\rangle  \E \langle u(T), \phi_k \rangle \\
 & = &  \sum_{k=1}^\infty \langle \psi_n, \phi_k\rangle \cdot  f_k\int_0^T \tau^{\alpha-1}  E_{\alpha,\alpha}(-\lambda_k\tau^\alpha) h(T-\tau)\ d\tau.  
 \end{eqnarray*}
Therefore, by using notation $\widehat{{\bf E}} = (\E \widehat{u}_n(T))_{n=1}^K \in \R^K$, we have identity
\begin{equation*}
	\widehat{{\bf E}} = A f,
\end{equation*}
where the operator $A: L^2(D) \to \R^N$ is linear and bounded due to Lemma \ref{regularity_infty} and satisfies
\begin{equation*}
	(A f)_n = \langle z_n, f\rangle
\end{equation*}
with 
\begin{equation*}
	z_n =  \sum_{k=1}^\infty \int_0^T \tau^{\alpha-1}  E_{\alpha,\alpha}(-\lambda_k\tau^\alpha) h(T-\tau)\ d\tau \cdot  \langle \psi_n, \phi_k\rangle \phi_k.
\end{equation*}
\\

{\bf Source-to-covariance mapping.}
We see that we have
\begin{eqnarray*}
	\C(\hum, \hun) & = &  \sum_{k=1}^\infty \sum_{\ell=1}^\infty \C(u_k, u_\ell)  \langle \psi_m, \phi_k \rangle \langle \psi_n, \phi_\ell \rangle \\
	& = & \boldsymbol \psi_m^\top {\bf C} \boldsymbol \psi_n,
\end{eqnarray*}
where $\boldsymbol \psi_m = (\langle \psi_m, \phi_k \rangle)_{k=1}^\infty$ and 
${\bf C} = (\C(u_k, u_\ell))_{k,\ell=1}^\infty$. By writing ${\bf R} = (\boldsymbol \psi_1, ..., \boldsymbol \psi_K)$, we see
\begin{equation*}
	\widehat{{\bf C}} = {\bf R}^\top {\bf C} {\bf R}.
\end{equation*}
Recall now the expression for $\C(u_{m}(T),u_n(T))$ in \eqref{cov_expression}.  We can rewrite \eqref{cov_expression} in the form
 \begin{equation*}
 	{\bf C} = \int_0^T {\bf g}(\tau) {\bf g}(\tau)^\top d\tau,
 \end{equation*}
where 
\begin{equation*}
	{\bf g}(\tau) = \left(g_k \tau^{\alpha-1} E_{\alpha,\alpha}(-\lambda_{k}\tau^\alpha)\right)_{k=1}^\infty : [0,T] \to \R^\infty.
\end{equation*}
 In consequence, we have
 \begin{equation}
 	\label{Chat_aux1}
 	\widehat{{\bf C}} =  \int_0^T {\bf R}^\top {\bf g}(\tau) {\bf g}(\tau)^\top {\bf R} \ d\tau.
 \end{equation}
 Let us consider now the integrand in \eqref{Chat_aux1}. We obtain
 \begin{eqnarray*}
 	({\bf R}^\top {\bf g}(\tau))_m & = & {\boldsymbol \psi_m} \cdot {\bf g}(\tau) \\
 	& = & \sum_{k=1}^\infty  \langle \psi_m, \phi_k \rangle 
 	\langle g, \phi_k \rangle \tau^{\alpha-1} 
 	E_{\alpha,\alpha}(-\lambda_{k}\tau^\alpha)\\
 	& =&  \left \langle g, \sum_{k=1}^\infty\langle \psi_m, \phi_k \rangle  
 	\tau^{\alpha-1} E_{\alpha,\alpha}(-\lambda_{k}\tau^\alpha)\phi_k\right\rangle \\
 	& = & \langle g, w_{m}(\tau) \rangle
 \end{eqnarray*}
 where
 \begin{equation*}
 	w_m(\tau) = \sum_{k=1}^\infty\langle \psi_m, \phi_k \rangle  \tau^{\alpha-1} 
 	E_{\alpha,\alpha}(-\lambda_{k}\tau^\alpha)\phi_k.
 \end{equation*}
Let us define an operator $B : H^2(D) \to \R^{K\times K}$ by
\begin{equation}
	\label{eq:operatorB}
	Bg = \int_0^T {\bf R}^\top {\bf g}(\tau) {\bf g}(\tau)^\top {\bf R}\ d\tau.
\end{equation} 
Clearly, due to Lemma \ref{regularity_infty} the operator $B$ is bounded.
Now we can state the discretized equations for $f$ and $g$: 
\begin{equation*}\label{discretization}
	Af=\widehat{{\bf E}}\quad {\rm and} \quad Bg=\widehat{{\bf C}}. 
\end{equation*}

\subsection{Numerical results with observations on the full domain}\label{fully} 

Here we investigate the numerical reconstruction with  
observations on the full domain, i.e. $D_{mea}=D$, but with correlations
based on one fixed point. In other words, we assume that 
 $\{\psi_n\}$ coincide with $\{\phi_n\}$, ${\mathcal J} = \{1,...,N\}$ 
 and $\mathcal{I}=\{N_0\}$ where $N_0$ is such that 
 $\langle g, \phi_{N_0}\rangle_{L^2(D)} \neq 0$. Moreover,  
 what is interesting, this formulation leads to a linear interpretation 
 of the operator $B$ in \eqref{eq:operatorB}.


The parameters used in these experiments are set as  
\begin{equation}\label{parameter}
 \alpha=0.8,\ T=1,\ h(t)\equiv1,\ N=36,\ N_0=1.
\end{equation}
The numerical results are displayed in Figures \ref{e1} and \ref{e2}, which show  
that the method localizes the sources well. The relative $L^2$ errors are 
collected by Table \ref{error}. Since the approximation is obtained on the 
basis functions $\phi_n$, the discontinuities of the true source terms are 
not exactly recovered. This can be seen from Figure \ref{e2} and the 
comparison between the errors of $(e1)$ and $(e2)$.

\begin{table}[h]
\begin{center}
 \begin{tabular}{|c|c|c|c|c|}
  \hline&$\frac{\|f-f_N\|_{L^2(D)}}{\|f\|_{L^2(D)}}$&$\gamma_f$ &$\frac{\||g|-|g_N|\|_{L^2(D)}}{\|g\|_{L^2(D)}}$&$\gamma_g$\\\hline    
 $(e1)$&6.06e-2&\multirow{2}{*}{Not applicable}&2.46e-2&\multirow{2}{*}{Not applicable}\\\cline{1-2} \cline{4-4}
 $(e2)$&4.88e-1&&5.46e-1&\\\hline
 $(e1a)$&2.55e-1&1e-10&1.06e-1&1e-12\\\hline
 $(e1b)$&2.77e-1&1e-10&7.54e-2&1e-12\\\hline
 $(e1c)$&3.76e-1&1e-10&1.81e-1&1e-11\\\hline
 $(e2a)$&5.20e-1&1e-10&8.35e-1&1e-16\\\hline
 $(e2b)$&6.16e-1&1e-13&6.54e-1&1e-16\\\hline
 $(e2c)$&6.47e-1&1e-13&1.24e-0&1e-16\\\hline
 \end{tabular}
\end{center}
\caption{Relative $L^2$ errors and the regularized parameters.}
\label{error}
\end{table}

\begin{figure}[th!]
	\center
\begin{subfigure}
  \centering
\includegraphics[trim = .5cm .5cm .5cm .8cm, clip=true,height=6.2cm,width=16.2cm]
		{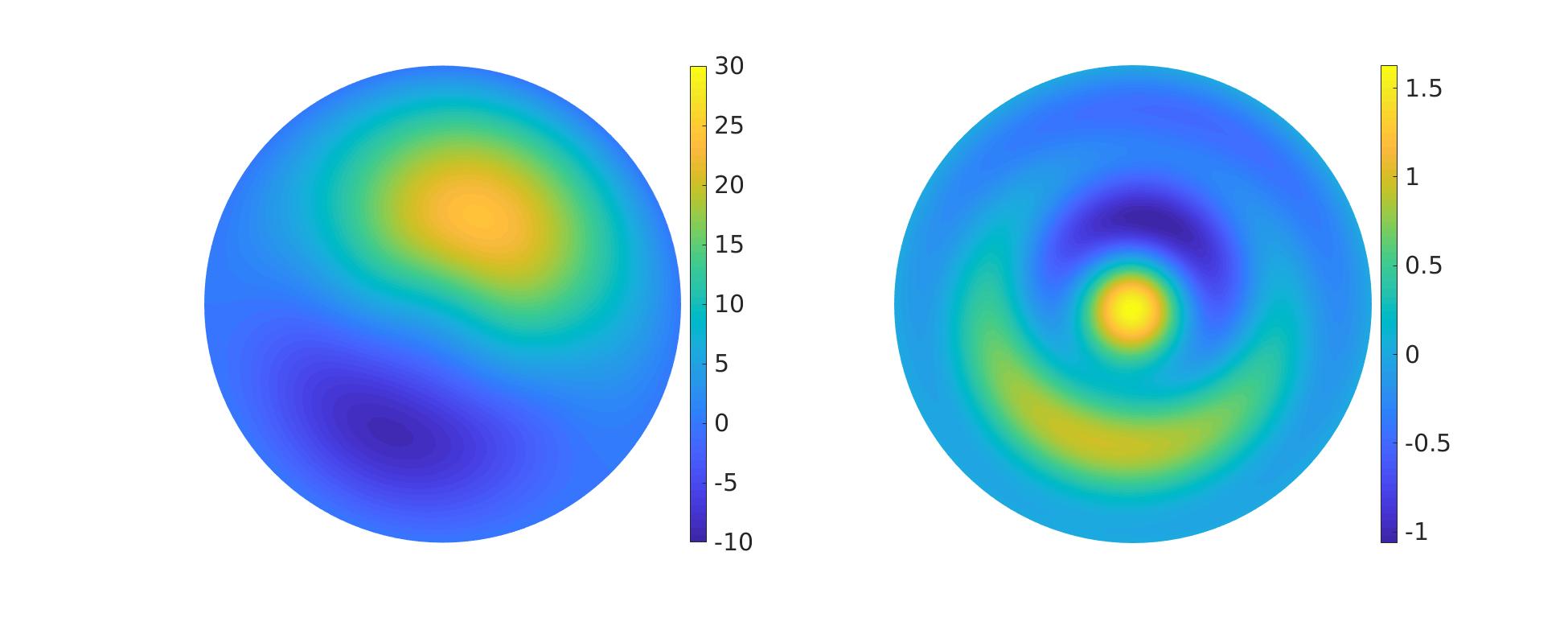}
\end{subfigure}
\\
\begin{subfigure}
  \centering
\includegraphics[trim = .5cm .5cm .5cm .8cm, clip=true,height=6.2cm,width=16.2cm]
		{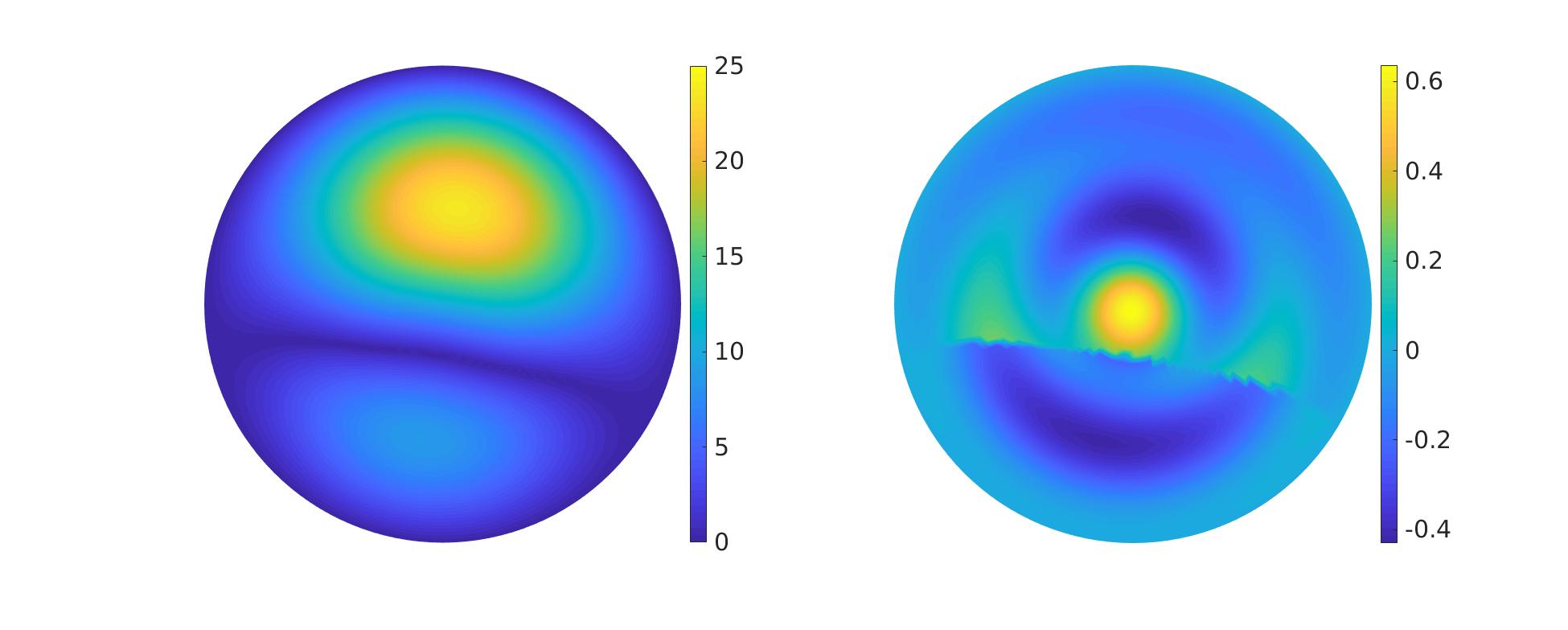}
\end{subfigure}
\caption{\small Experiment $(e1)$ for $f$ (top) and $|g|$ (bottom).\\
Numerical approximation (left), 
difference between exact solution and approximation (right).}
\label{e1}
\end{figure}

\begin{figure}[th!]
	\center
\begin{subfigure}
  \centering
\includegraphics[trim = .5cm .5cm .5cm .8cm, clip=true,height=6.2cm,width=16.2cm]
		{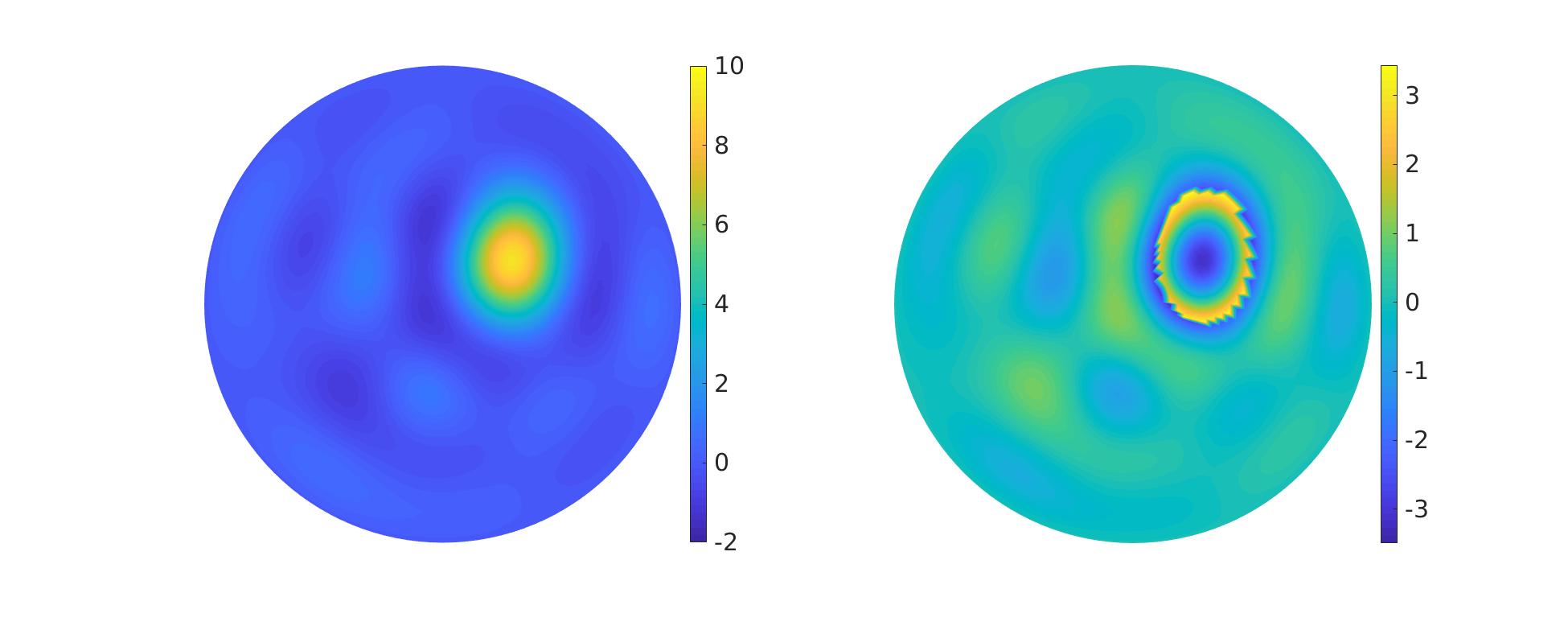}
\end{subfigure}
\\
\begin{subfigure}
  \centering
\includegraphics[trim = .5cm .5cm .5cm .8cm, clip=true,height=6.2cm,width=16.2cm]
		{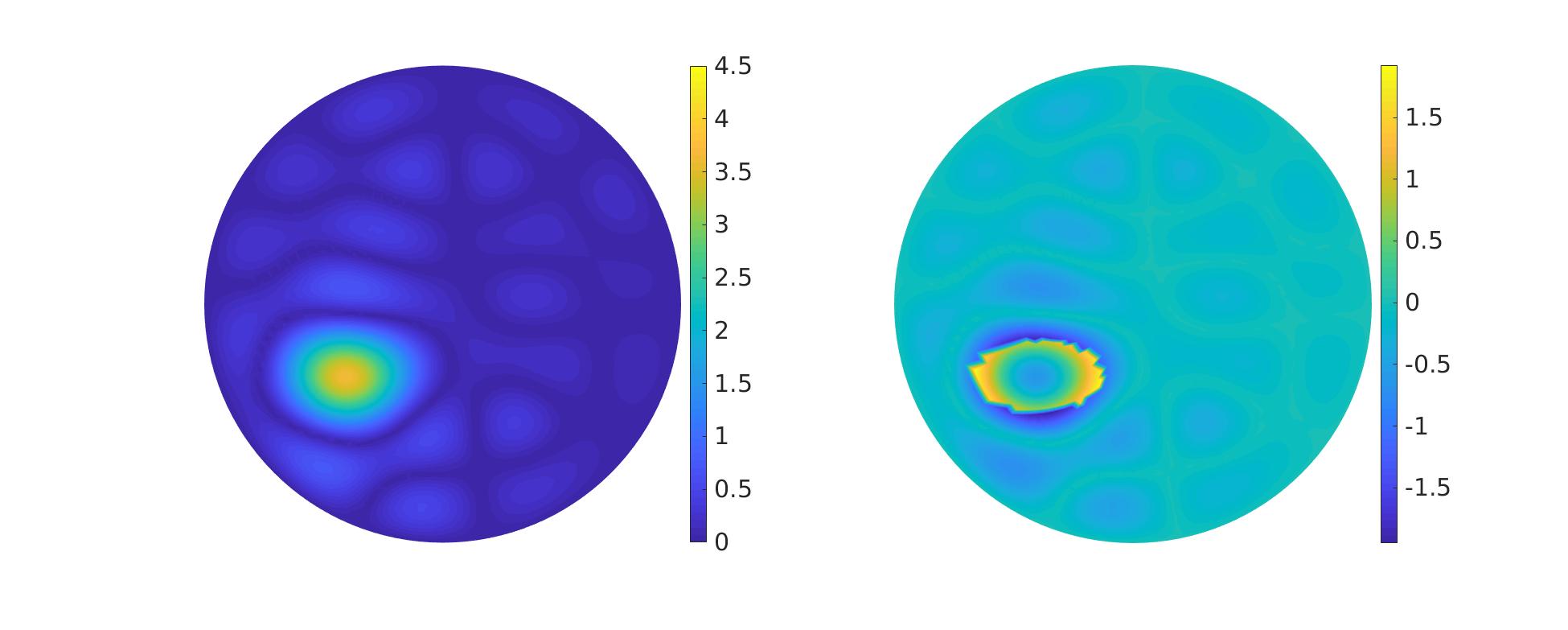}
\end{subfigure}
\caption{\small Experiment $(e2)$ for $f$ (top) and $|g|$ (bottom).\\
Numerical approximation (left), 
difference between exact solution and approximation (right).}
\label{e2}
\end{figure}

\subsection{Numerical results with observations on partial domain}
\label{subsec:partial_domain}
In this subsection, we consider the numerical reconstruction with partial 
measurements, i.e. $D_{mea}\subset D$ and $D_{mea}\ne D.$ 
Here $\{\psi_n\}$ are set as the characteristic functions on each 
uniformly partition of $D_{mea}$ upon the polar coordinates $(r,\theta)$.

Given the noisy data $( \widehat{{\bf E}}^\delta,  \widehat{{\bf C}}^\delta)$, 
for the first equation we set the optimization problem as
\begin{equation*} 
 \argmin_{\vec{f}_N\in \mathbb{R}^N} \Big\{ \|A\vec{f}_N -  
 \widehat{{\bf E}}^\delta\|_{l^2}^2 +\gamma_f\|\vec{f}_N\|_{l^2}^2 \Big\}.
\end{equation*} 
Due to the nonlinearity of the second equation, we choose the 
Levenberg-Marquardt type Newton's iteration
\begin{equation*}
 \vec{g}_{l+1}=\vec{g}_l+[B'(\vec{g}_l)^\top B'(\vec{g}_l)+\gamma_gI_N]^{-1}
  B'(\vec{g}_l)^\top (\widehat{{\bf C}}^\delta-B\vec{g}_l),
\end{equation*}
and the Frechet derivative $B'$ of $B$ is given as 
\begin{equation*}
 B'(g)[h]=\int_0^T {\bf R}^\top [{\bf g}(\tau) {\bf h}(\tau)^\top
 +{\bf h}(\tau) {\bf g}(\tau)^\top] {\bf R}\ d\tau.
\end{equation*}

\begin{figure}[h]
	\center
\begin{subfigure}
  \centering
\includegraphics[trim = .5cm .5cm .5cm .8cm, clip=true,height=6.2cm,width=16.2cm]
		{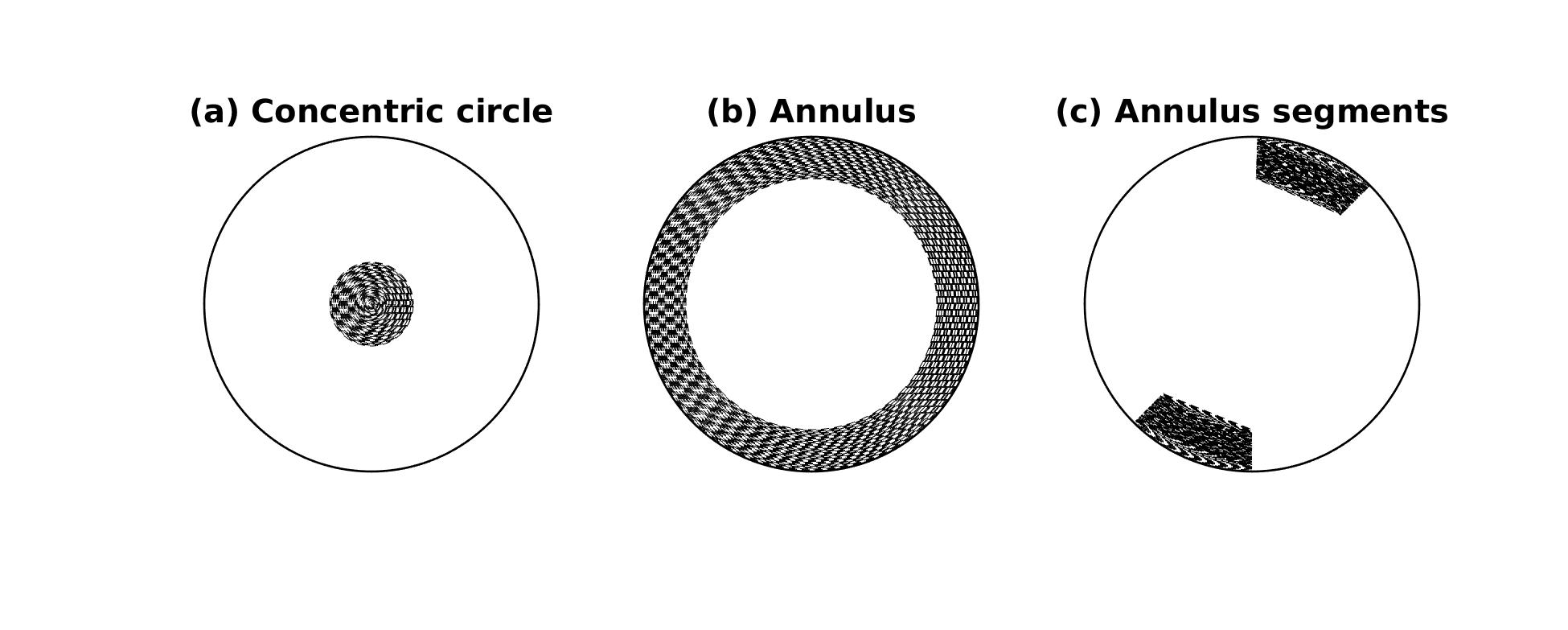}
\end{subfigure}
\caption{\small Three partial domains $D_{mea}$ in section 
\ref{subsec:partial_domain}. In each case the shaded area is observed.}
\label{subdomain}
\end{figure}
We try three kinds of subsets of $D$ which are set as the observed area and 
can be seen in Figure \ref{subdomain}. $(a)$ is a concentric with radius $1/4$, 
$(b)$ is the annulus between the circles with radius $3/4$ and $1$, and 
$(c)$ contains two segments of the annulus in $(b)$ with $\pi/4$ radian span.
The exact solutions and the parameter setting \eqref{parameter} 
in experiments $(e1)$ and $(e2)$ are still used but the corresponding notations 
are changed to $(e1a),\ (e1b),\ (e1c)$ 
and $(e2a),\ (e2b),\ (e2c).$ The results are displayed in Figures  
\ref{e1_f_partial}, \ref{e1_g_partial}, \ref{e2_f_partial}, \ref{e2_g_partial}, 
and the relative $L^2$ errors are recorded in Table \ref{error}. 
In these experiments, the values of the regularized parameters 
$\gamma_f, \gamma_g$ are chosen empirically.

Similar to the results of experiments $(e1)$ and $(e2)$, 
the reconstructions for smooth exact 
solutions are better than the ones for nonsmooth case. Furthermore, due to the 
lack of measured data, the performance of experiments 
$\{(e1j),(e2j): j=a,b,c\}$ is worse than $(e1)$ and $(e2)$, and this can be seen 
in Figures \ref{e1_f_partial}, \ref{e1_g_partial}, \ref{e2_f_partial}, 
\ref{e2_g_partial} and Table \ref{error}. Also, the results for $(e2c)$ 
show that the observed subdomain $(c)$ in Figure \ref{subdomain} for the 
discontinuous case is close to the limit in terms of noise level and the size of the subdomain of which can ensure a useful 
localization of the source.

\begin{figure}[th!]
	\center
\begin{subfigure}
\centering
\includegraphics[trim = .5cm .5cm .5cm .8cm, clip=true,height=6.2cm,width=16.2cm]
		{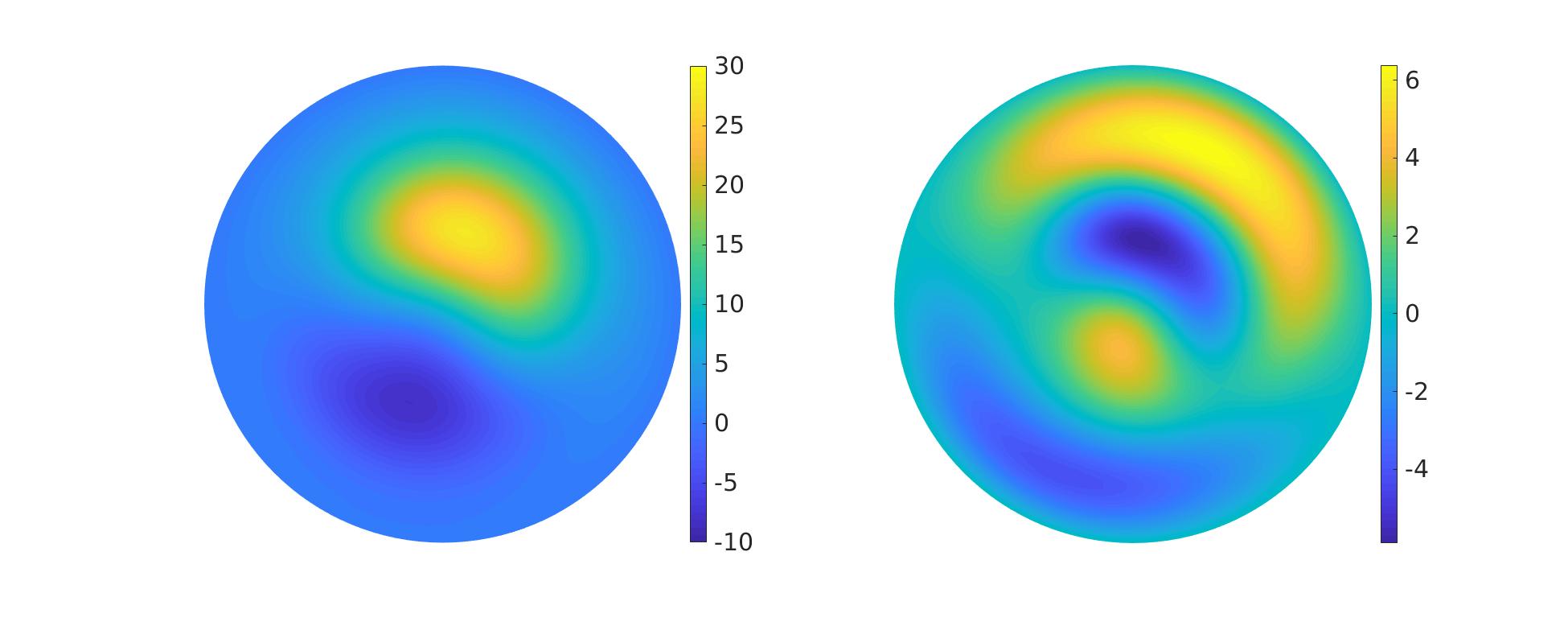}
\end{subfigure}
\\
\begin{subfigure}
  \centering
\includegraphics[trim = .5cm .5cm .5cm .8cm, clip=true,height=6.2cm,width=16.2cm]
		{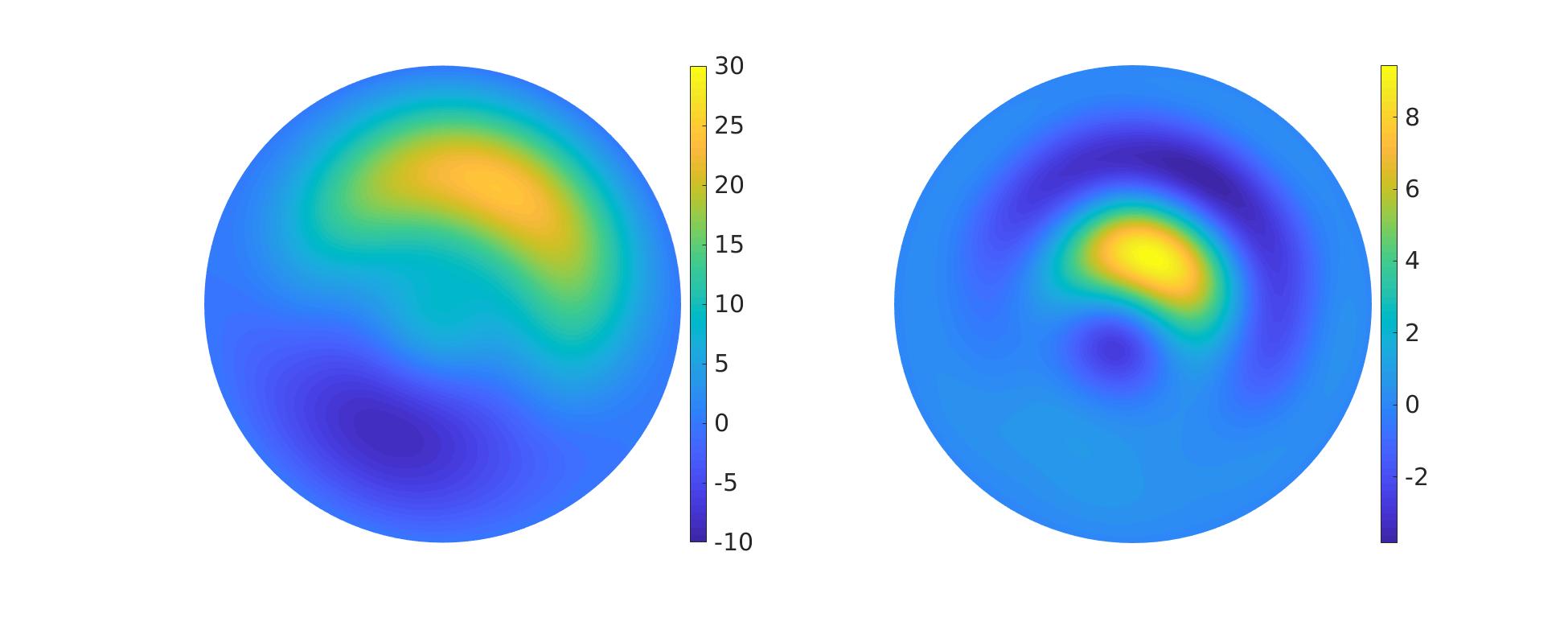}
\end{subfigure}
\\
\begin{subfigure}
  \centering
\includegraphics[trim = .5cm .5cm .5cm .8cm, clip=true,height=6.2cm,width=16.2cm]
		{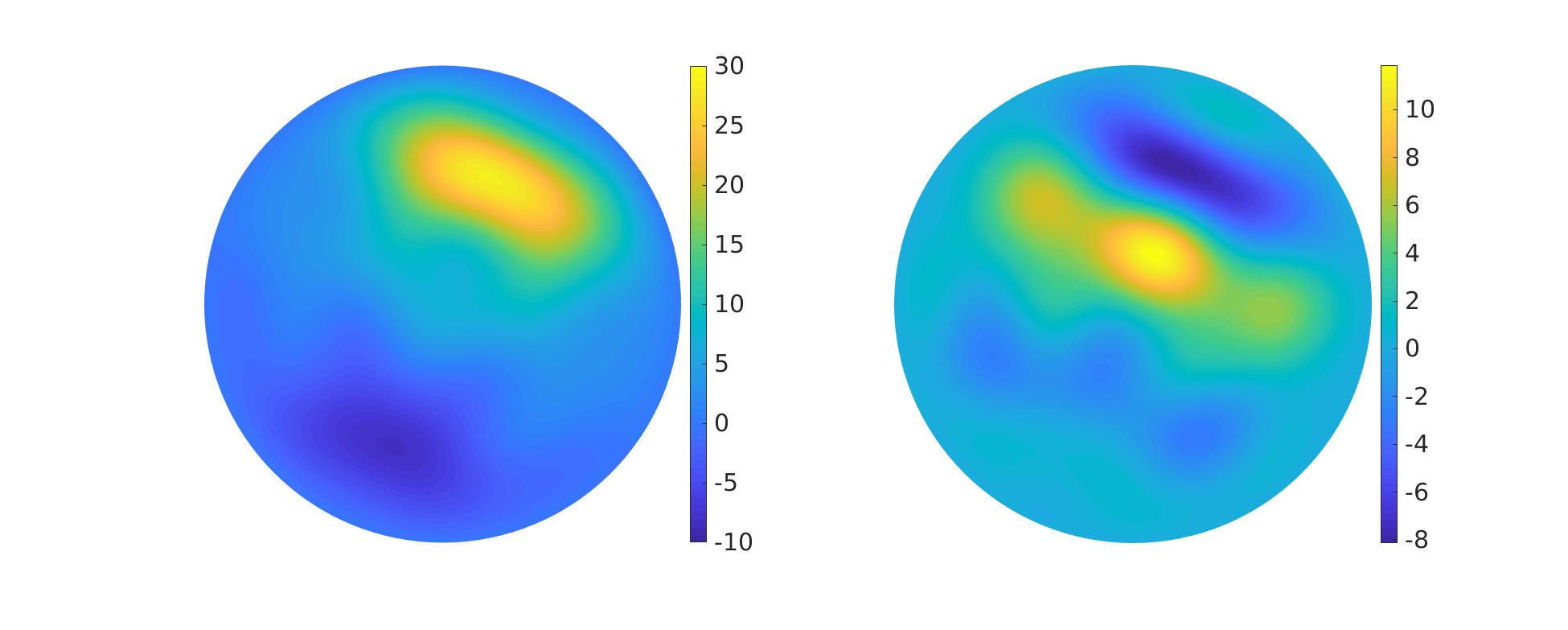}
\end{subfigure}
\caption{\small Reconstruction for $f$ with experiment $(e1a)$ (top), 
$(e1b)$ (middle) and $(e1c)$ (bottom).
\\ Numerical approximation (left), 
difference between exact solution and approximation (right).}
\label{e1_f_partial}
\end{figure}

\begin{figure}[th!]
	\center
\begin{subfigure}
  \centering
\includegraphics[trim = .5cm .5cm .5cm .8cm, clip=true,height=6.2cm,width=16.2cm]
		{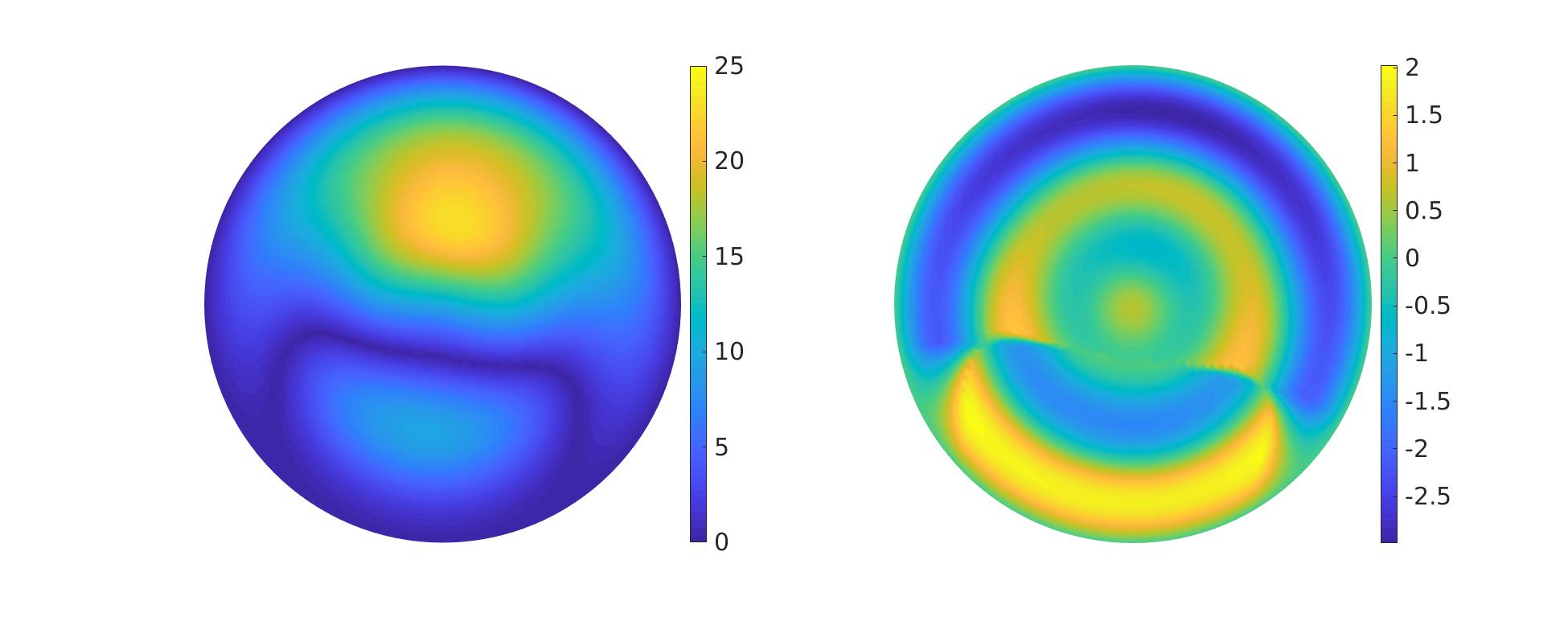}
\end{subfigure}
\\
\begin{subfigure}
  \centering
\includegraphics[trim = .5cm .5cm .5cm .8cm, clip=true,height=6.2cm,width=16.2cm]
		{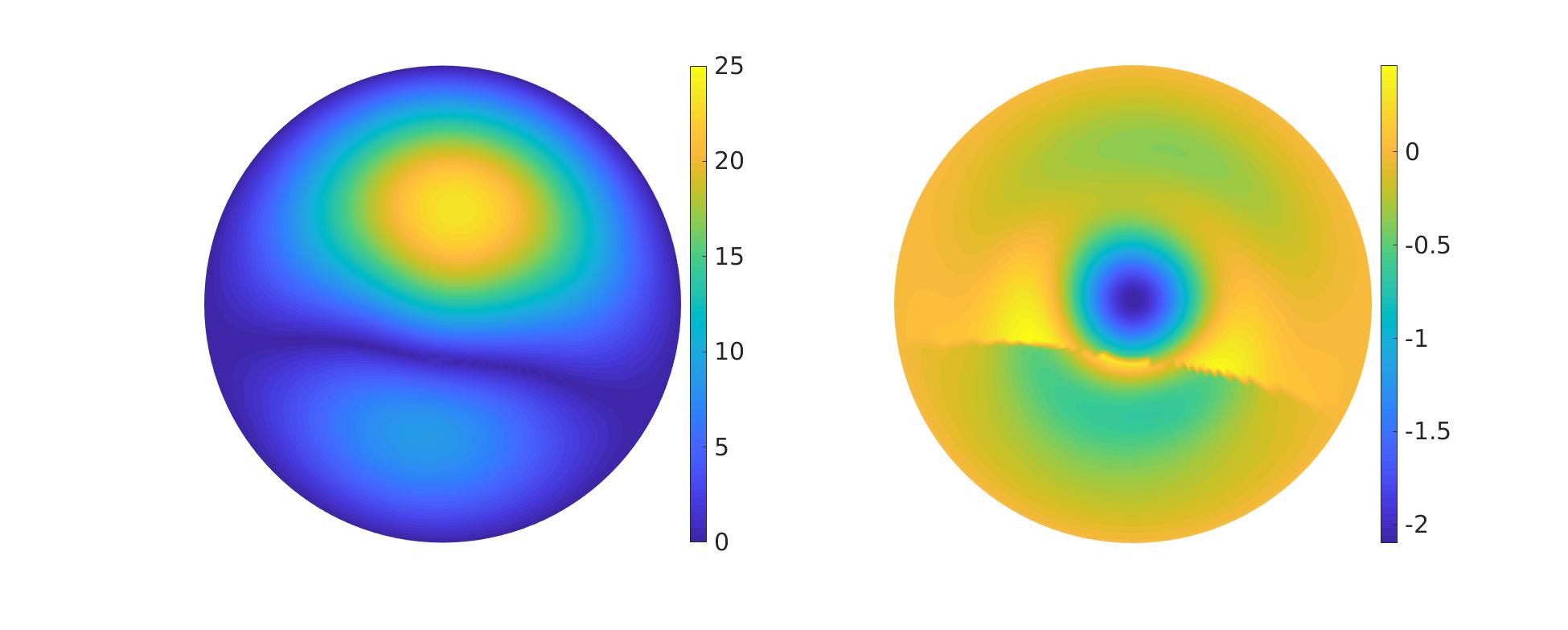}
\end{subfigure}
\\
\begin{subfigure}
  \centering
\includegraphics[trim = .5cm .5cm .5cm .8cm, clip=true,height=6.2cm,width=16.2cm]
		{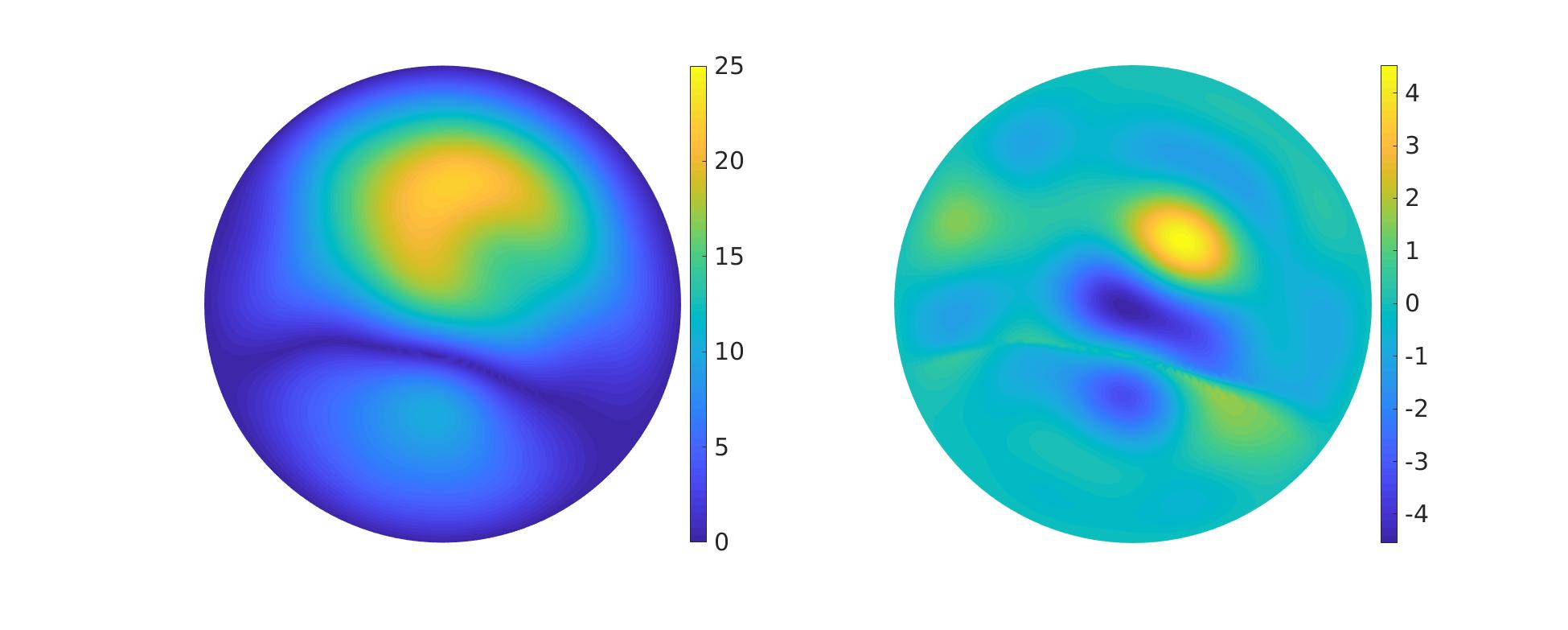}
\end{subfigure}
\caption{\small Reconstruction for $|g|$ with experiment $(e1a)$ (top), 
$(e1b)$ (middle) and $(e1c)$ (bottom).
\\ Numerical approximation (left), 
difference between exact solution and approximation (right).}
\label{e1_g_partial}
\end{figure}

\begin{figure}[th!]
	\center
\begin{subfigure}
  \centering
\includegraphics[trim = .5cm .5cm .5cm .8cm, clip=true,height=6.2cm,width=16.2cm]
		{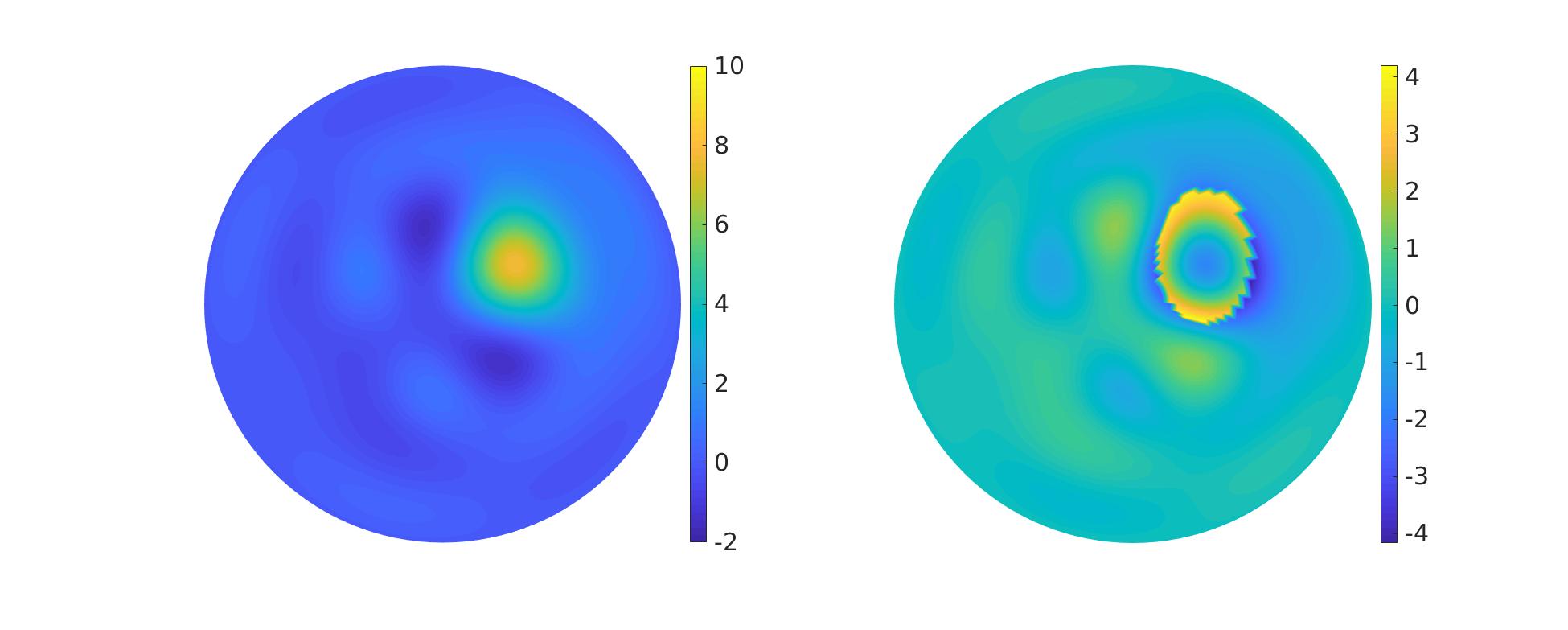}
\end{subfigure}
\\
\begin{subfigure}
  \centering
\includegraphics[trim = .5cm .5cm .5cm .8cm, clip=true,height=6.2cm,width=16.2cm]
		{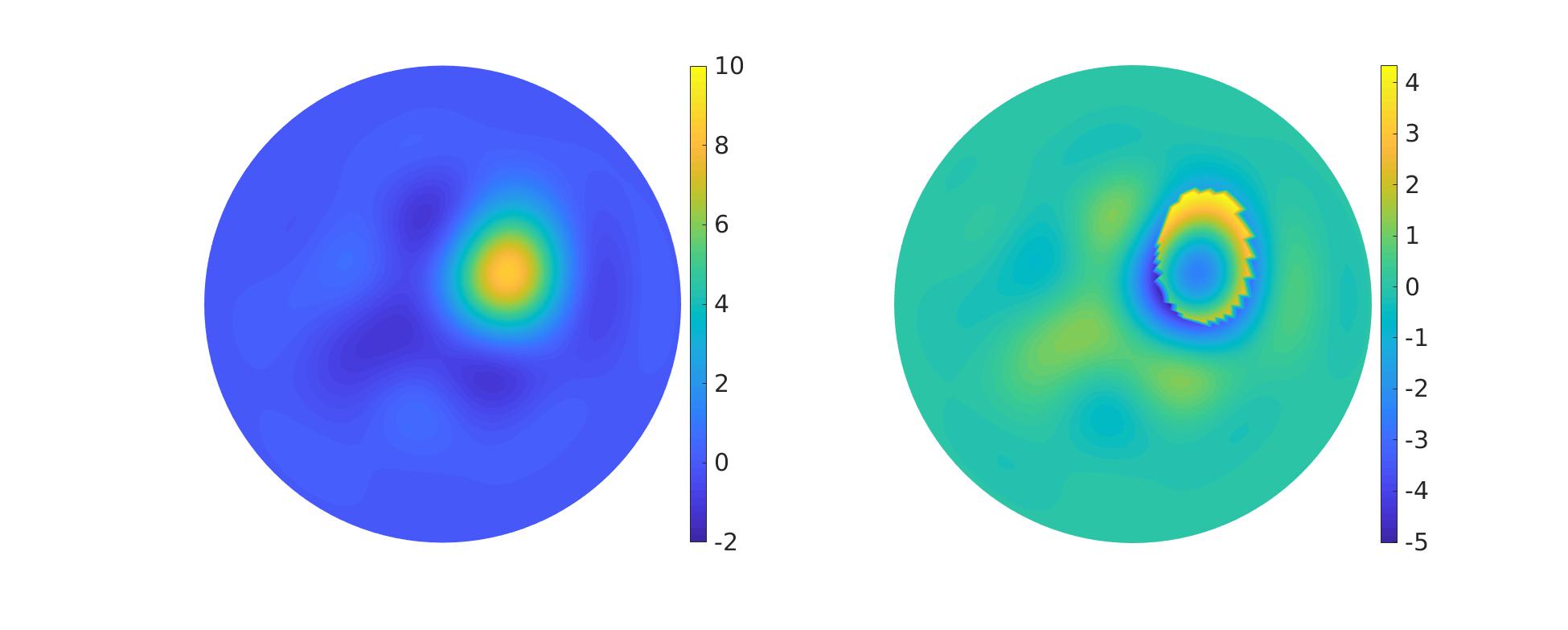}
\end{subfigure}
\\
\begin{subfigure}
  \centering
\includegraphics[trim = .5cm .5cm .5cm .8cm, clip=true,height=6.2cm,width=16.2cm]
		{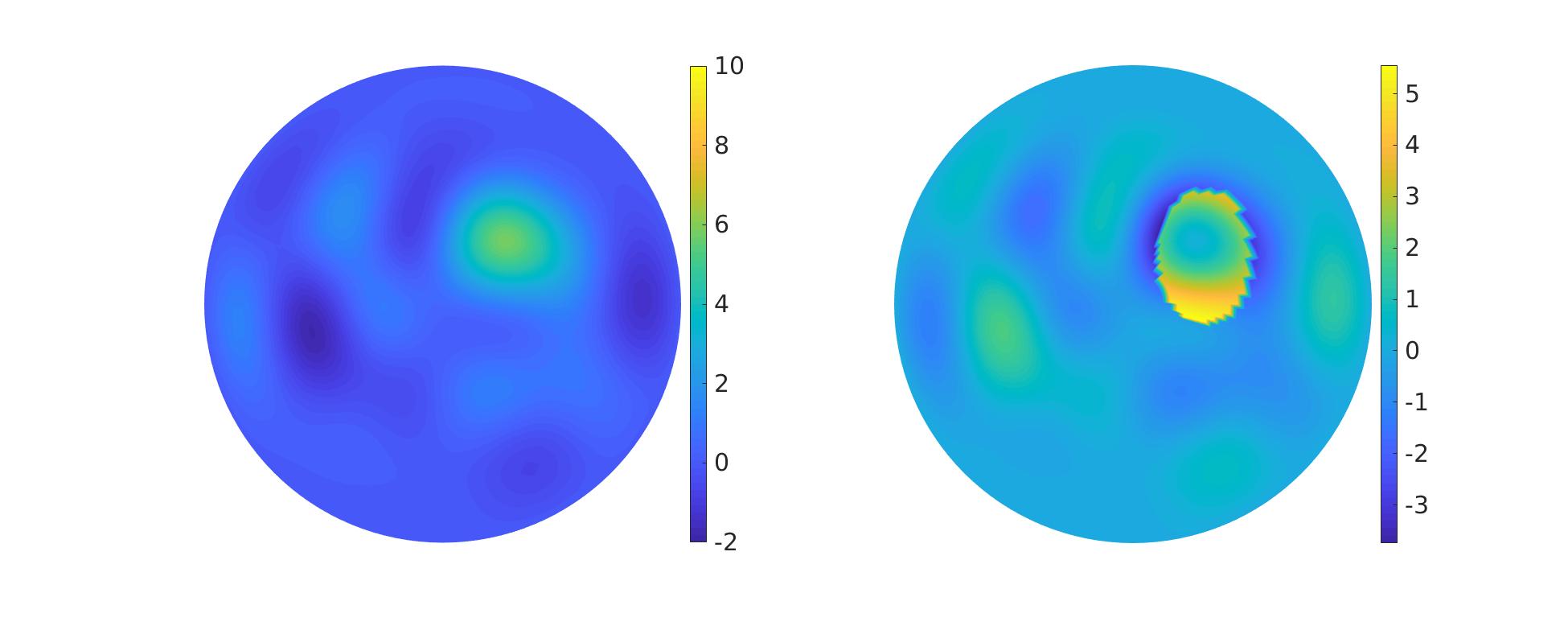}
\end{subfigure}
\caption{\small Reconstruction for $f$ with experiment $(e2a)$ (top), 
$(e2b)$ (middle) and $(e2c)$ (bottom).
\\ Numerical approximation (left), 
difference between exact solution and approximation (right).}
\label{e2_f_partial}
\end{figure}

\begin{figure}[th!]
	\center
\begin{subfigure}
  \centering
\includegraphics[trim = .5cm .5cm .5cm .8cm, clip=true,height=6.2cm,width=16.2cm]
		{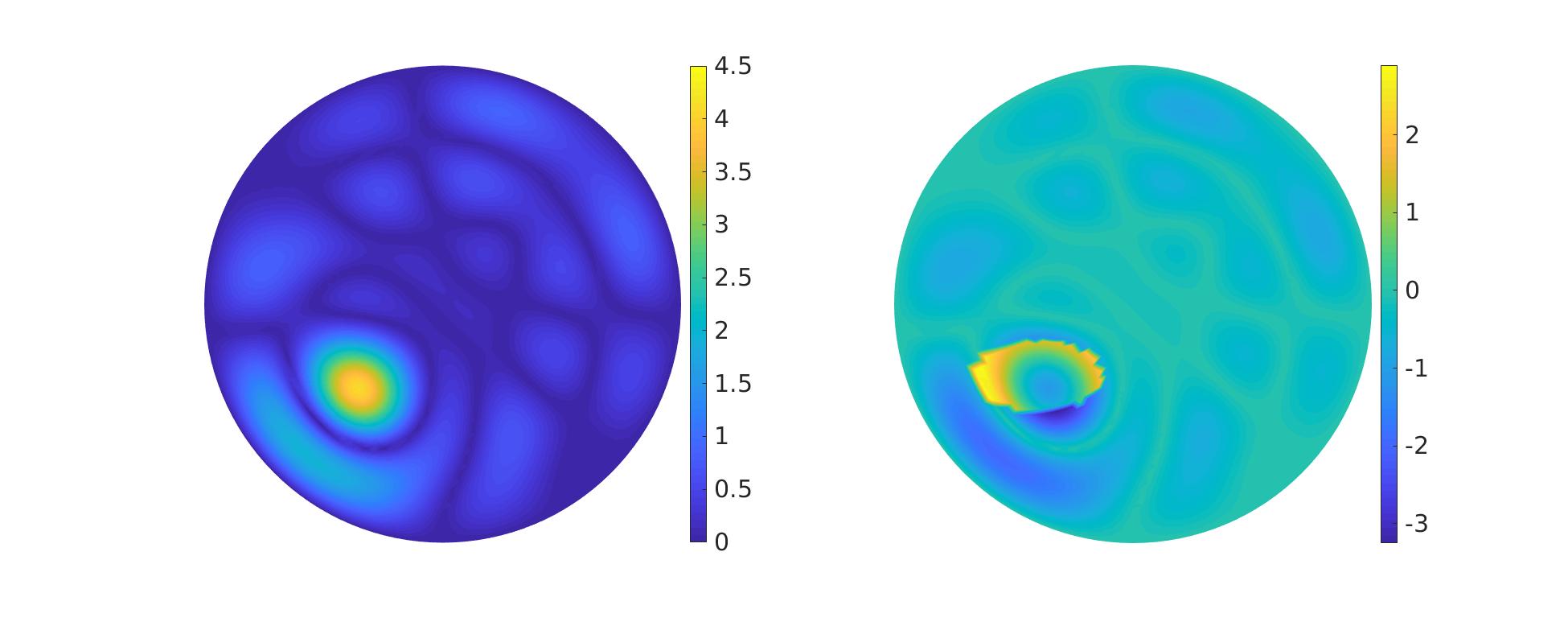}
\end{subfigure}
\\
\begin{subfigure}
  \centering
\includegraphics[trim = .5cm .5cm .5cm .8cm, clip=true,height=6.2cm,width=16.2cm]
		{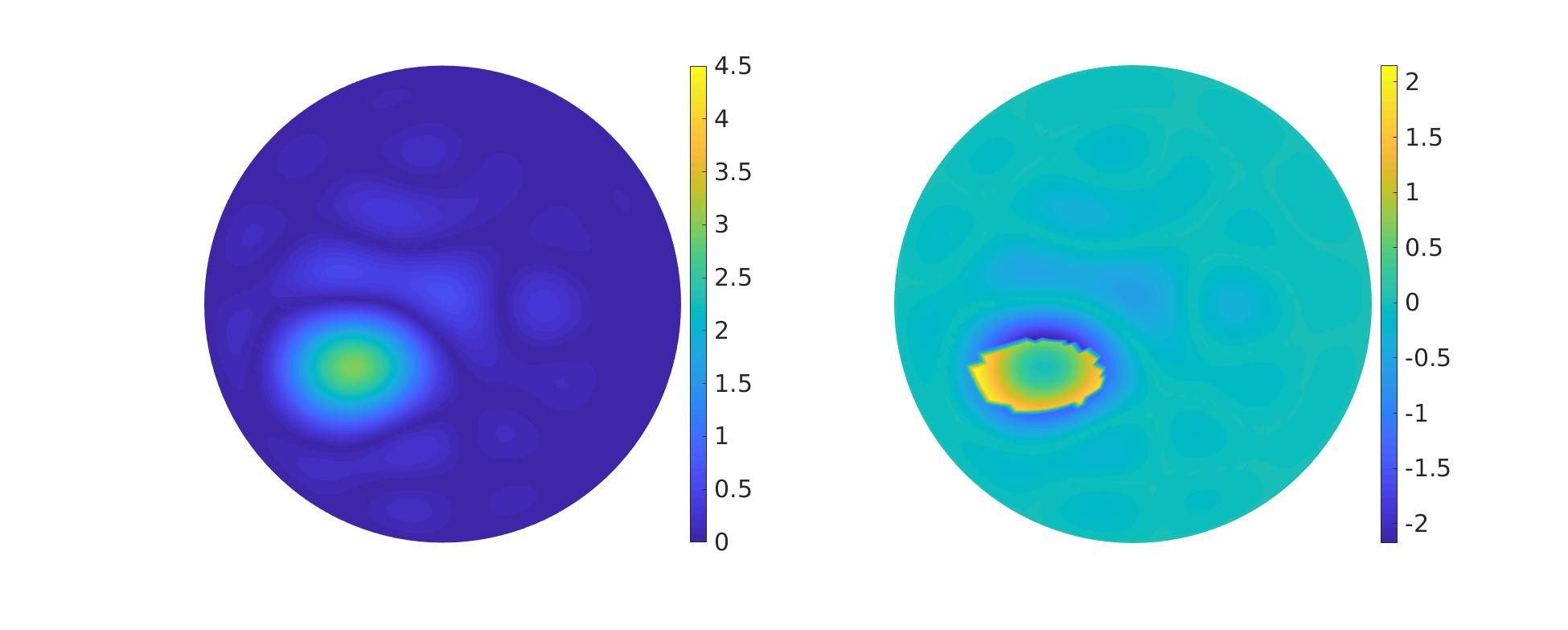}
\end{subfigure}
\\
\begin{subfigure}
  \centering
\includegraphics[trim = .5cm .5cm .5cm .8cm, clip=true,height=6.2cm,width=16.2cm]
		{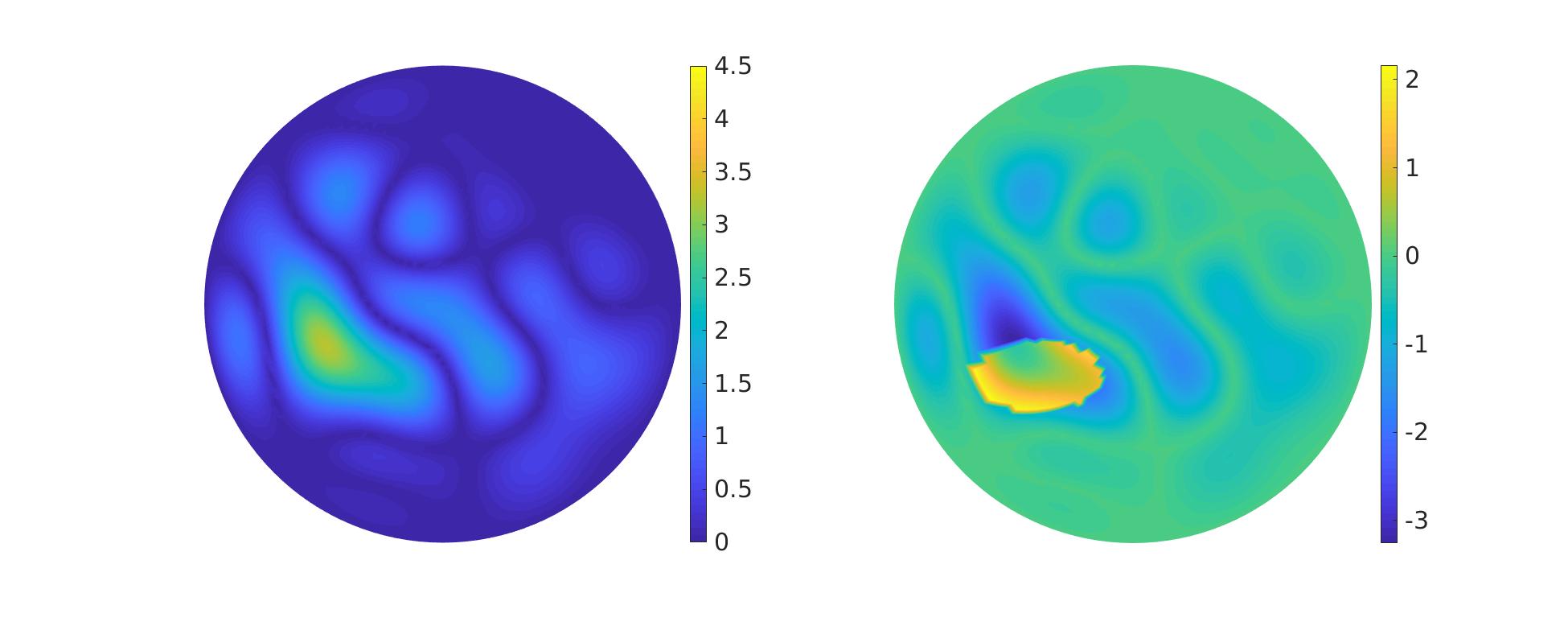}
\end{subfigure}
\caption{\small Reconstruction for $|g|$ with experiment $(e2a)$ (top), 
$(e2b)$ (middle) and $(e2c)$ (bottom).
\\ Numerical approximation (left), 
difference between exact solution and approximation (right).}
\label{e2_g_partial}
\end{figure}

\section*{Acknowledgement}

PN is partially supported by China Scholarship Council and Finnish National Agency for 
Education (ID:201702720003), NSFC (key projects no.11331004, no.11421110002) 
and the Programme of Introducing Talents of Discipline to Universities (no.B08018).
TH and ZZ were supported by the Three-year grant "Stochastic inverse problems 
in atmospheric tomography" of the University of Helsinki.
In addition, TH was supported by the Academy of Finland via projects 275177 and 314879.

\bibliographystyle{abbrvurl} 
\bibliography{SFDE_inversesource}

\end{document}